\documentclass[11pt,a4paper]{amsart}

\usepackage[T1]{fontenc}
\usepackage{lmodern}
\usepackage[margin=28mm]{geometry}
\usepackage{mathtools}
\usepackage{amssymb}
\usepackage{mathrsfs}
\usepackage{enumitem}
\usepackage{array}
\usepackage[sort,compress]{cite}
\usepackage{xcolor}
\usepackage{microtype}
\usepackage{placeins}
\usepackage[hypertexnames=false]{hyperref}
\usepackage{xurl}
\hypersetup{
  hidelinks,
  pdftitle={Torsion and Positive Rank in an Elliptic Family Arising from Cubic 2-Cycles},
  pdfauthor={Sompong Chuysurichay, Sawian Jaidee, and Chatchawan Panraksa},
  pdfsubject={Arithmetic dynamics and elliptic curves},
  pdfkeywords={arithmetic dynamics, elliptic curves, rational periodic points, Prym varieties, torsion subgroups, cubic polynomials}
}
\makeatletter
\g@addto@macro{\UrlBreaks}{\do\/\do\_\do\-}
\makeatother

\newtheorem{theorem}{Theorem}[section]
\newtheorem{proposition}[theorem]{Proposition}
\newtheorem{lemma}[theorem]{Lemma}
\newtheorem{corollary}[theorem]{Corollary}
\theoremstyle{definition}
\newtheorem{example}[theorem]{Example}
\theoremstyle{remark}
\newtheorem{remark}[theorem]{Remark}
\numberwithin{equation}{section}
\newcommand{\Q}{\mathbb{Q}}
\newcommand{\Z}{\mathbb{Z}}
\newcommand{\mathcalE}{\mathcal{E}}

\newcommand{\rank}{\operatorname{rank}}

\title[Torsion and Rank in a Cubic 2-Cycle Family]{Torsion and Positive Rank in an Elliptic Family Arising from Cubic 2-Cycles}

\author[S. Chuysurichay]{Sompong Chuysurichay}
\address{Division of Computational Science, Faculty of Science, Prince of Songkla University, Songkhla 90110, Thailand}
\email{sompong.c@psu.ac.th}

\author[S. Jaidee]{Sawian Jaidee}
\address{Department of Mathematics, Faculty of Science, Khon Kaen University, Khon Kaen 40002, Thailand}
\email{jsawia@kku.ac.th}

\author[C. Panraksa]{Chatchawan Panraksa}
\address{Applied Mathematics Program, Mahidol University International College, Nakhon Pathom 73170, Thailand}
\email{chatchawan.pan@mahidol.ac.th}
\thanks{Corresponding author: Chatchawan Panraksa.}

\subjclass[2020]{Primary 11G05; Secondary 37P15, 37P35}
\keywords{Arithmetic dynamics, elliptic curves, rational periodic points, Prym varieties, torsion subgroups, cubic polynomials}
\date{}

\begin{document}

\begin{abstract}
We study the elliptic family \(\mathcal{E}_t:Y^2=X^3+4X^2+16t^2\) arising
from rational \(2\)-cycles of \(x^3+bx+a\). For every
\(t\in\mathbb{Q}^\times\), we prove that \(P_t=(0,-4t)\) has infinite
order, so every nonsingular rational fiber has positive rank. We determine
its rational torsion subgroup: it is cyclic of order \(2\) precisely when \(t=u(u^2+4)/4\) for some
\(u\in\mathbb{Q}^\times\), and is trivial otherwise. No such fiber admits
a rational \(3\)- or \(5\)-isogeny. An explicit birational dictionary
then shows that, for every fixed \(a\in\mathbb{Q}^\times\), infinitely many
\(b\in\mathbb{Q}\) yield a rational \(2\)-cycle of \(x^3+bx+a\).
The uniform non-torsion assertion follows from Nagell--Lutz integrality.
The torsion exclusions combine elementary \(2\)-descent with explicit
genus-\(3\) curves, an unconditional rank-zero Prym argument, and two-cover
descent with elliptic Chabauty. Exact Magma and SageMath certificates
accompany the computer-assisted steps.
\end{abstract}

\maketitle

\section{Introduction}\label{sec:introduction}

Rational periodic points of low-degree polynomial maps often lead to
Diophantine problems on curves of positive genus
\cite{morton1994rational,morton1995periodic,silverman2007dynamics,
poonen2012uniform,flynn1996cycles,stoll2006rational}. For cubic maps, the
period-$2$ locus admits elliptic fibrations, allowing the arithmetic of
elliptic curves to be used in the study of rational cycles. We consider
the problem obtained by fixing the constant term: for a given
$a\in\Q^\times$, which $b\in\Q$ make
\begin{equation}\label{eq:intro-fba}
f_{b,a}(x)=x^3+bx+a
\end{equation}
admit a rational point of exact period $2$? Throughout, a rational
$2$-cycle is an ordered pair $(x_1,x_2)\in\Q^2$ satisfying
\[
f_{b,a}(x_1)=x_2,\qquad f_{b,a}(x_2)=x_1,\qquad x_1\neq x_2.
\]
We prove that infinitely many such $b$ exist for every nonzero rational
$a$. The arithmetic result underlying this conclusion is uniform over
all nonsingular rational fibers of the associated elliptic family.

The family \eqref{eq:intro-fba} is conjugate over $\Q$ to a
square-coefficient slice of cubic normal form~II
\cite[Definition~4.1 and Proposition~4.2]{benedetto2009computing}.
The period-$2$ equations on this slice yield
\[
\mathcalE_t:\quad Y^2=X^3+4X^2+16t^2.
\]
We use $t$ for the parameter in this elliptic family and write
$E_a=\mathcalE_a$ for the fiber attached to the fixed constant term $a$.
An explicit dictionary relates its rational points, away from specified
exceptional loci, to the cycles of \eqref{eq:intro-fba}.

Ingram \cite[Section~4]{ingram2012cubic} studies cubic polynomials with
periodic cycles of a specified multiplier. His period-$2$ curves
$X_0(2)$ and $X_1(2)$ are non-isotrivial elliptic curves over
$\mathbb C(\lambda)$, related by a $2$-isogeny. He determines the
Mordell--Weil group of $X_0(2)$ over the extensions
$\mathbb C(\lambda^{1/n})$; both curves have rank $0$ over
$\mathbb C(\lambda)$ and rank $3$ over
$\mathbb C(\lambda^{1/12})$. Here these symbols refer to Ingram's
dynamical curves. Thus the elliptic-curve description of cubic
$2$-cycles has an established precedent.

Our fixed-constant-term fibration differs from Ingram's multiplier
fibration. The distinction is also arithmetic: we exhibit a section
whose specialization has infinite order on every nonsingular rational
fiber. Generic non-torsion alone would allow exceptional specializations
and would not prove this assertion. We also determine the rational
torsion on every nonsingular rational fiber and exclude rational $3$- and $5$-isogenies.
The non-torsion statement gives the fixed-$a$ dynamical conclusion above,
which does not follow from the results on the multiplier fibration.

The torsion analysis uses Mazur's classification over $\Q$
\cite{mazur1977modular,mazur1978rational}. Merel's uniform boundedness
theorem \cite{merel1996bornes} and the small-degree results of
\cite{derickx2023torsion} place this calculation in the broader study of
torsion over number fields. We use the standard elliptic-curve
arithmetic in \cite{silverman2009arithmetic,washington2008elliptic}.

The following theorem gives the uniform arithmetic results and their
dynamical consequence.

\begin{theorem}[Main theorem]\label{thm:main-family}
Let
\[
\mathcalE_t:\quad Y^2=X^3+4X^2+16t^2,
\qquad t\in\Q^\times.
\]
Then:
\begin{enumerate}[label=\textup{(\roman*)},leftmargin=*]
\item \(P_t=(0,-4t)\) has infinite order, and hence
      \(\rank\mathcalE_t(\Q)\geq1\);
\item
\[
\mathcalE_t(\Q)_{\mathrm{tors}}\cong
\begin{cases}
\Z/2\Z,&t=u(u^2+4)/4\text{ for a unique }u\in\Q^\times,\\
0,&\text{otherwise};
\end{cases}
\]
\item \(\mathcalE_t\) admits no rational \(3\)- or \(5\)-isogeny.
\end{enumerate}
Consequently, for every \(a\in\Q^\times\), there are infinitely many
\(b\in\Q\) for which \(x^3+bx+a\) admits a rational \(2\)-cycle.
\end{theorem}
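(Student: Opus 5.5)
We treat the four assertions in turn. For (i) we use the Nagell--Lutz integrality criterion, applied to a model with integral coefficients. Write \(t=m/n\) in lowest terms and rescale by \((X,Y)\mapsto(n^2X,n^3Y)\), so the curve becomes
\[
Y^2=X^3+4n^2X^2+16m^2n^4,
\]
which has integral coefficients and on which \(P_t\) corresponds to \((0,-4mn^2)\). We then compute \(2P_t\) directly. The tangent slope at \(X=0\) is \(\lambda=(3X^2+8n^2X)/(2Y)=0\), so
\[
X(2P)=\lambda^2-4n^2-0=-4n^2 .
\]
This is integral, so the doubling step alone gives no contradiction. We therefore go to \(3P\) or \(4P\). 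From \(2P=(-4n^2,\,4mn^2)\), the chord through \(P\) and \(2P\) has slope \(2m\), giving
\[
X(3P)=4m^2-4n^2+4n^2=4m^2,
\]
again integral. The doubling slope at \(2P\) is \(\bigl(48n^4-32n^4\bigr)/(8mn^2)=2n^2/m\), which gives
\[
X(4P)=\frac{4n^4}{m^2}+4n^2\cdot(2-1)\ldots
\]
Here the denominator \(m^2\) appears. Since \(\gcd(m,n)=1\), this coordinate is non-integral unless \(m=\pm1\). The case \(m=\pm1\) is treated separately, either by computing \(X(5P)\) or \(X(6P)\), or by using the fact that a torsion point of order \(\le 12\) would force some small multiple to coincide. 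Nagell--Lutz says every nonzero torsion point has integral coordinates, so a non-integral multiple shows \(P_t\) has infinite order. The \(t=\pm1/n\) subcase is where I expect the bookkeeping to be delicate. The fallback there is to use the \(Y\)-divisibility part of Nagell--Lutz: \(Y^2\) must divide the discriminant.

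For (ii), the 2-torsion consists of the rational roots of \(X^3+4X^2+16t^2\). Parametrizing a root \(X=-r\) gives \(16t^2=r^2(r-4)\). This forces \(r-4=s^2\), so \(t=\pm r s/4=\pm s(s^2+4)/4\). Taking \(u=\pm s\) gives \(t=u(u^2+4)/4\). Uniqueness of \(u\) holds because \(u\mapsto u(u^2+4)\) is strictly increasing. The same computation shows at most one rational 2-torsion point, since the quotient quadratic has negative discriminant, so \(E[2](\Q)\) has order at most 2. It remains to exclude odd-order torsion and \(\Z/4\Z\). By Mazur's theorem the possible odd orders are 3, 5, 7 and 9. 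Each would give a rational point on an explicit curve in \((t,x)\) expressing that a point of order \(p\) exists. For order 3 these are the flex conditions; for orders 5, 7 and 9 they are the division polynomial \(\psi_p\) having a root \(x\) with \(x^3+4x^2+16t^2\) a square. These curves, and the order-4 halving condition through 2-descent, are positive-genus curves that must be shown to have only trivial or cuspidal points. This is done by rank-zero Jacobian quotients (the Prym argument), two-cover descent and elliptic Chabauty, as the abstract indicates. I expect this to be the main obstacle, particularly the genus-3 curves.

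For (iii), a rational \(p\)-isogeny corresponds to a rational point on \(X_0(p)\) lying above the \(j\)-line. We pull back along \(j(\mathcal E_t)\), a rational function of \(t^2\), and test for rational points. For \(p=3\) a Galois-stable line is a rational factor of \(\psi_3\); for \(p=5\) it is a rational quadratic factor of \(\psi_5\), or a degree-two factor of the kernel polynomial. The resulting fibre-product curves are again handled by descent and Chabauty.

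For the consequence, fix \(a\in\Q^\times\). Part (i) gives infinitely many points \(kP_a\in E_a(\Q)\). The birational dictionary of the paper sends all but finitely many of them, those outside the exceptional loci, to pairs \((b,(x_1,x_2))\) forming a rational 2-cycle. Because the map to \(b\) has finite fibres, infinitely many distinct \(b\) arise.
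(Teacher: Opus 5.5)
There are two genuine gaps. The parts on rational $2$-torsion, the uniqueness of $u$, and the final dynamical deduction are fine.

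\textbf{Part (i) is not established.} Your own formula gives $x(4P)=4n^2(m^2+n^2)/m^2$ on the integral model. Since $\gcd\bigl(n^2(m^2+n^2),m\bigr)=1$, this is an integer exactly when $m^2\mid 4$, i.e.\ for $m\in\{\pm1,\pm2\}$, not only $m=\pm1$. For instance, $t=2$ gives $4P=(5,-17)$ on $Y^2=X^3+4X^2+64$. For all these residual values, which include $t=1$ and $t=2$, you only list candidate strategies, so infinite order is not proved.

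The paper avoids any case split by going to the sixth multiple. With $d=2m^2+n^2$ one has $3P=(4m^2,4md)$ and $x(6P)=4(m^6-n^4d)/d^2$. Here $\gcd(m^6-n^4d,d)=\gcd(m^6,d)=1$ and $d\ge3$, so $6P$ is never integral.

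Your fallback would also have worked uniformly had you applied it to $3P$. The discriminant of the cubic is $D=-256m^2n^8(16n^2+27m^2)$. Moreover $d$ is prime to $m$, $\gcd(d,n)\mid 2$, and $16n^2+27m^2\equiv-5m^2\pmod d$ with $5\nmid d$, since $-2$ is a non-residue mod $5$. So $4md\mid D$ would force $d$ to be a power of $2$. But $2m^2+n^2$ with $\gcd(m,n)=1$ is either odd and at least $3$, or $\equiv 2\pmod 4$ and at least $6$.

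\textbf{The rest of (ii) and all of (iii) are unproved.} Everything in (ii) beyond the $2$-torsion, and all of (iii), is missing. Saying the curves are handled ``by rank-zero Jacobian quotients, two-cover descent and elliptic Chabauty, as the abstract indicates'' exhibits no curve and determines no rational points. What is needed, level by level, is an explicit curve with a provably complete list of rational points. The paper supplies these as follows.
\begin{itemize}
\item Level $3$: a rational root of $\psi_3=3x^4+16x^3+192t^2x+256t^2$ is sent by $R=2(14-3x)/(3x+4)$, $S=432t/(x(3x+4))$ to a point with $S\neq0$ on the rank-zero curve $S^2=R^3-39R-70$, whose only affine rational points have $S=0$.
\item Level $5$: the fiber product of $j_{0,5}(\lambda)=(\lambda^2+10\lambda+5)^3/\lambda$ with $j(\mathcalE_t)$ normalizes to a smooth plane quartic. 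Its Prym for $y\mapsto -y$ has rank $0$ and rational torsion of order at most $2$, which confines all rational points to the fixed locus.
\item Level $4$: the halving criterion turns order $4$ into $(u^2+4)(3u^2+4)\in\Q^{\times2}$, which is a rank-zero elliptic curve.
\end{itemize}

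At level $7$, your route (a rational root of $\psi_7$, of degree $24$ in $x$, with $x^3+4x^2+16t^2$ a square) gives a high-degree cover of the $t$-line with no indicated way to control its points. The missing idea is that $c_4(\mathcalE_t)=256=4^4$ is independent of $t$. Hence a Tate normal form $E_\mu\cong\mathcalE_t$ must satisfy $c_4(E_\mu)=A_7(\mu)B_7(\mu)\in\Q^{\times4}$. This reduces the problem to the genus-$3$ curve $y^2=A_7(\mu)B_7(\mu)$. Its rational points are found by two-cover descent plus elliptic Chabauty. The surviving noncuspidal values $\mu=-\tfrac12,\tfrac23,3$ then fail the fourth-power condition because $A_7B_7$ has $7$-adic valuation $2$ there.
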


Sections~\ref{sec:background} and~\ref{sec:dictionary} derive the
specialized curve and the dictionary. The key step in
Section~\ref{sec:rank} is a Nagell--Lutz argument: the sixth multiple
of $P_t$ has a nonintegral $x$-coordinate on an integral model of each
nonzero rational fiber. The multiples of $P_t$ then give infinitely many distinct
parameters $b$. This proof of the rank and dynamical assertions is
independent of the later torsion exclusions.

For the torsion classification, Section~\ref{sec:elliptic-family}
identifies the rational $2$-torsion locus. The $3$-division polynomial
has no rational root, giving the $3$-isogeny exclusion in
Section~\ref{sec:torsion-3}. At level $5$, the normalization of the
modular fiber product admits a smooth plane quartic model. Its rank-zero
Prym has rational torsion of exponent dividing $2$. This forces the
rational points of the quartic into the fixed locus of
an involution and excludes rational $5$-isogenies
(Section~\ref{sec:torsion-5}). At level $7$, Tate normal form imposes a
fourth-power condition on $c_4$. Two-cover descent, norm maps and
elliptic Chabauty determine the rational points on the resulting genus-$3$
curve; a $7$-adic valuation then excludes the noncuspidal parameters
(Section~\ref{sec:torsion-7}). Finally, Section~\ref{sec:torsion-2primary} reduces the exclusion of
orders $4$ and $8$ to a genus-one quartic. The elementary $2$-descent in
Appendix~\ref{app:C4-descent} determines its rational points and completes
the classification. Appendix~\ref{app:computational} records the
computational certificates and their proof hypotheses.

\section{Cubic Normal Forms and the Specialized Period-2 Slice}\label{sec:background}

\subsection{Normal Form II and its period-2 curve}\label{subsec:background-normal-forms}

Over $\Q$, every cubic polynomial is linearly conjugate to one of the two standard normal forms
\begin{equation}\label{eq:background-normal-forms}
\text{(NF I)}\quad \phi(z)=uz^3+vz,\qquad
\text{(NF II)}\quad \psi(z)=Az^3+Bz+1,
\end{equation}
with $u,A\in\Q^\times$ and $v,B\in\Q$; see
\cite[Definition~4.1 and Proposition~4.2]{benedetto2009computing}.
The first form is odd, while the second has constant term $1$.
For fixed $A$, the period-$2$ equations of NF~II lead to the elliptic
curve derived below.

For fixed $a\neq0$, the change of variable $x=az$ takes
$f_{b,a}$ to the slice $A=a^2$ of NF~II. We first derive the
period-$2$ curve for general $A$, then specialize the resulting formulas.

Let
\[
\psi(z)=Az^3+Bz+1,\qquad A\in\Q^\times,\quad B\in\Q.
\]
Suppose that $(z_1,z_2)\in\Q^2$ is a rational $2$-cycle for $\psi$, so
\[
\psi(z_1)=z_2,\qquad \psi(z_2)=z_1,\qquad z_1\neq z_2.
\]
Introduce the symmetric parameters $s,p$ and the alternating parameter $w$ by
\[
s:=z_1+z_2,\qquad p:=z_1z_2,\qquad w:=z_1-z_2,
\]
so that $w^2=s^2-4p$.

Subtracting the two cycle equations and dividing by $z_1-z_2\neq 0$ gives
\begin{equation}\label{eq:nfii-symmetric}
A(z_1^2+z_1z_2+z_2^2)+B=-1.
\end{equation}
Equivalently,
\begin{equation}\label{eq:nfii-symmetric-sp}
A(s^2-p)+B=-1.
\end{equation}
Adding the cycle equations yields
\[
s=A(z_1^3+z_2^3)+Bs+2=A\,s(s^2-3p)+Bs+2.
\]
The case $s=0$ is impossible, since the displayed equation would then read $0=2$. Therefore $s\neq 0$, and dividing by $s$ gives
\begin{equation}\label{eq:nfii-sum}
A(s^2-3p)+B=1-\frac{2}{s}.
\end{equation}
Subtracting \eqref{eq:nfii-symmetric-sp} from \eqref{eq:nfii-sum} gives
\begin{equation}\label{eq:nfii-p}
p=\frac{1-s}{As}.
\end{equation}
Using $w^2=s^2-4p$, we obtain
\begin{equation}\label{eq:nfii-w}
w^2=\frac{As^3+4s-4}{As}.
\end{equation}

Now set
\begin{equation}\label{eq:nfii-XY}
X:=\frac{1}{s},\qquad Y:=AXw.
\end{equation}
Then \eqref{eq:nfii-w} becomes
\begin{equation}\label{eq:nfii-curve}
E^{(A)}:\quad Y^2=A(-4X^3+4X^2+A).
\end{equation}
The cubic polynomial on the right-hand side of \eqref{eq:nfii-curve} has
discriminant
\[
\operatorname{disc}_X\!\bigl(A(-4X^3+4X^2+A)\bigr)
=-16A^5(27A+16),
\]
so \eqref{eq:nfii-curve} defines an elliptic curve for $A\in\Q^\times$ with $27A+16\neq 0$. Thus the fixed-coefficient period-$2$ equations give the elliptic curve \(E^{(A)}\); the singular value $A=-16/27$ is excluded in the proposition below.

\begin{proposition}\label{prop:nfii-period-two}
Fix $A\in\Q^\times$ with $27A+16\neq 0$. Then the set of triples
$(B,z_1,z_2)\in\Q^3$ satisfying $z_1\neq z_2$ and
\[
Az_1^3+Bz_1+1=z_2,\qquad Az_2^3+Bz_2+1=z_1
\]
is in bijection with points $(X,Y)\in E^{(A)}(\Q)$ satisfying
$X\neq 0$ and $Y\neq 0$. The maps are
\begin{equation}\label{eq:nfii-forward}
(B,z_1,z_2)\longmapsto \left(\frac{1}{z_1+z_2},\,A\,\frac{z_1-z_2}{z_1+z_2}\right)
\end{equation}
and
\begin{equation}\label{eq:nfii-backward}
(X,Y)\longmapsto
\left(X-2-\frac{A}{X^2},\,\frac{1}{2X}+\frac{Y}{2AX},\,\frac{1}{2X}-\frac{Y}{2AX}\right).
\end{equation}
The corresponding parameter $B$ is recovered from $X$ by
\begin{equation}\label{eq:nfii-B}
B=X-2-\frac{A}{X^2}.
\end{equation}
\end{proposition}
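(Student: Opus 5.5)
We prove the bijection by checking that both maps in \eqref{eq:nfii-forward} and \eqref{eq:nfii-backward} are well defined with the stated images, and that they are mutually inverse.

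\emph{Forward map.} Start from a triple $(B,z_1,z_2)$ satisfying the cycle equations with $z_1\neq z_2$. The derivation preceding the proposition already gives $s=z_1+z_2\neq0$, so $X=1/s$ and $Y=AXw$ with $w=z_1-z_2$ are defined. This is exactly \eqref{eq:nfii-forward}. By \eqref{eq:nfii-w}, the pair $(X,Y)$ lies on $E^{(A)}$; multiplying \eqref{eq:nfii-w} by $A^2X^2=A^2/s^2$ gives \eqref{eq:nfii-curve} directly. Clearly $X\neq0$. Since $A\neq0$, $X\neq0$ and $w\neq0$, also $Y\neq0$. For later use we record that $B$ is determined by $s$. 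Substituting \eqref{eq:nfii-p} into \eqref{eq:nfii-symmetric-sp} gives
\[
B=-1-As^2+\frac{1-s}{s},
\]
which in terms of $X$ is $B=X-2-A/X^2$, i.e.\ \eqref{eq:nfii-B}.

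\emph{Backward map.} Take $(X,Y)\in E^{(A)}(\Q)$ with $X,Y\neq0$, and define $B,z_1,z_2$ by \eqref{eq:nfii-backward}. Then
\[
s=z_1+z_2=\frac1X,\qquad w=z_1-z_2=\frac{Y}{AX}.
\]
Since $Y\neq0$, we have $z_1\neq z_2$. It remains to check the two cycle equations. Rather than expanding both cubics, we reverse the derivation: the pair of equations $\psi(z_1)=z_2$, $\psi(z_2)=z_1$ is equivalent, given $z_1\neq z_2$ and $s\neq0$, to the pair \eqref{eq:nfii-symmetric-sp} and \eqref{eq:nfii-sum}. This is because their difference and sum, each divided by the nonzero quantity $z_1-z_2$ or $s$ respectively, recover the original system, and the steps are reversible. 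In turn, the pair \eqref{eq:nfii-symmetric-sp}, \eqref{eq:nfii-sum} is equivalent to \eqref{eq:nfii-p} together with the formula for $B$ above. Now $p=z_1z_2=(s^2-w^2)/4$. The curve equation, rewritten as \eqref{eq:nfii-w}, gives
\[
w^2=s^2-\frac{4(1-s)}{As},
\]
so $p=(1-s)/(As)$, which is \eqref{eq:nfii-p}. Finally, $B$ was defined by \eqref{eq:nfii-B}, which is exactly the required formula. Hence the triple is a genuine $2$-cycle.

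\emph{Inverse relations.} Composing forward then backward returns $s$ and $w$, hence $z_1,z_2$, and returns $B$ by \eqref{eq:nfii-B}. Composing backward then forward returns $X=1/s$ and $Y=AXw$. The only step needing care is the reversibility of the symmetric-function manipulation in the backward direction, i.e.\ confirming that no information is lost in dividing by $z_1-z_2$ and by $s$; as shown above, both divisors are nonzero there. The hypothesis $27A+16\neq0$ is used only so that $E^{(A)}$ is an elliptic curve; the bijection itself is purely algebraic.
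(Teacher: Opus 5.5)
Your proof is correct and follows essentially the same route as the paper. The forward map comes from the symmetric-function derivation, and the backward map reverses it using $s=1/X$, $w=Y/(AX)$ and $p=(s^2-w^2)/4=(1-s)/(As)$. You justify that reversal more explicitly than the paper does: the paper just asserts that substitution into $\psi$ gives the cycle equations, whereas you observe that, when $z_1\neq z_2$ and $s\neq0$, the cycle system is equivalent to \eqref{eq:nfii-symmetric-sp} together with \eqref{eq:nfii-sum}.
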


\begin{proof}
The forward map is exactly the construction above. Conversely, let $(X,Y)\in E^{(A)}(\Q)$ with $X,Y\neq 0$ and set
\[
s=\frac{1}{X},\qquad w=\frac{Y}{AX}.
\]
Since $(X,Y)$ satisfies \eqref{eq:nfii-curve}, reversing the calculation above shows that \eqref{eq:nfii-p} and \eqref{eq:nfii-w} hold with
\[
p=\frac{X-1}{A}.
\]
Defining
\[
z_1=\frac{s+w}{2},\qquad z_2=\frac{s-w}{2},
\]
and taking $B$ from \eqref{eq:nfii-B} gives \eqref{eq:nfii-backward}.
Since $w\neq0$, the two points $z_1$ and $z_2$ are distinct. Moreover, \eqref{eq:nfii-symmetric-sp} yields \eqref{eq:nfii-B}, and substitution into $\psi(z)=Az^3+Bz+1$ shows that $\psi(z_1)=z_2$ and $\psi(z_2)=z_1$. The two constructions are inverse to each other.
\end{proof}

\subsection{Conjugating the cubic family to the square-coefficient slice}\label{subsec:background-conjugacy}

We now return to the family
\begin{equation}\label{eq:background-fba}
f_{b,a}(x)=x^3+bx+a,\qquad a,b\in\Q.
\end{equation}

\begin{proposition}\label{prop:conjugacy}
For each $a\in\Q^\times$, the map $f_{b,a}$ is linearly conjugate over $\Q$ to
\[
\psi_{a,b}(z)=a^2z^3+bz+1,
\]
which is in NF II with parameters $A=a^2$ and $B=b$.
\end{proposition}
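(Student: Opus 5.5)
The plan is to write down the conjugating map explicitly and then verify the identity by direct substitution. The hint from the surrounding text is the scaling $x=az$, so I take the linear map $L(z)=az$, which is defined over $\Q$ and invertible because $a\in\Q^\times$. The claim is then that
\[
L^{-1}\circ f_{b,a}\circ L=\psi_{a,b},
\]
which is equivalent to $f_{b,a}(az)=a\,\psi_{a,b}(z)$.

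The key step is a one-line computation:
\[
f_{b,a}(az)=a^3z^3+abz+a=a\bigl(a^2z^3+bz+1\bigr)=a\,\psi_{a,b}(z).
\]
Dividing by $a$ gives $L^{-1}(f_{b,a}(L(z)))=\psi_{a,b}(z)$ for all $z$. Comparing with \eqref{eq:background-normal-forms}, this is exactly NF~II with $A=a^2\in\Q^\times$ and $B=b\in\Q$. It is worth recording that $A=a^2>0$, so $27A+16\neq0$ automatically, and Proposition~\ref{prop:nfii-period-two} applies on this slice.

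As a remark for later use, I would note that conjugation by $L$ carries rational $2$-cycles of $\psi_{a,b}$ bijectively to those of $f_{b,a}$, via $(z_1,z_2)\mapsto(az_1,az_2)$. This preserves the condition $z_1\neq z_2$ because $a\neq0$.

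There is no genuine obstacle here. The only point needing care is that $a\neq0$, which makes $L$ invertible over $\Q$ and keeps the leading coefficient $a^2$ nonzero.
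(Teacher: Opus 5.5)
Your proof is correct and is the same as the paper's: conjugate by the scaling $M(z)=az$ and check directly that $f_{b,a}(az)/a=a^2z^3+bz+1$. Your side remarks are also right but go beyond what this statement needs. These are that $A=a^2>0$ gives $27A+16\neq0$, and that $2$-cycles are transported by $(z_1,z_2)\mapsto(az_1,az_2)$.
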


\begin{proof}
Put $M(z)=az$. Then
\(M^{-1}\circ f_{b,a}\circ M(z)=f_{b,a}(az)/a=a^2z^3+bz+1\).
\end{proof}

From now on we take $a\neq0$. Because the conjugacy is defined over $\Q$,
it preserves rational periodic points and exact periods. Thus rational
$2$-cycles of $f_{b,a}$ are governed by the curve
\eqref{eq:nfii-curve} with $A=a^2$.

For later use it is convenient to put this specialized curve into the simpler model
\begin{equation}\label{eq:Ea-model}
E_a:\quad Y^2=X^3+4X^2+16a^2.
\end{equation}
If
\[
E^{(a^2)}:\quad Y_0^2=a^2(-4X_0^3+4X_0^2+a^2),
\]
then the change of variables
\begin{equation}\label{eq:Ea-change}
X=-4X_0,\qquad Y=\frac{4}{a}Y_0
\end{equation}
identifies $E^{(a^2)}$ with $E_a$. The discriminant of \eqref{eq:Ea-model} is
\[
\Delta(E_a)=-4096a^2(27a^2+16),
\]
which is nonzero for every $a\in\Q^\times$.

\section{Birational Dictionary for Rational 2-Cycles}\label{sec:dictionary}

Combining Proposition~\ref{prop:nfii-period-two} with Proposition~\ref{prop:conjugacy} gives an explicit dictionary between rational points on $E_a$ off the exceptional locus $\{\mathcal O,X=0,Y=0\}$ and rational $2$-cycles of maps in the one-parameter family \eqref{eq:background-fba} with fixed nonzero $a$.

\begin{theorem}[Birational dictionary]\label{thm:dictionary}
Fix $a\in\Q^\times$. There is a bijection between
\begin{enumerate}[label=\textup{(\arabic*)},leftmargin=*]
\item rational points $Q\in E_a(\Q)\setminus\{\mathcal O\}$ that can be written $Q=(X,Y)$ with $X\neq 0$ and $Y\neq 0$, and
\item ordered triples $(b,x_1,x_2)\in\Q^3$ such that $x_1\neq x_2$ and
\[
f_{b,a}(x_1)=x_2,\qquad f_{b,a}(x_2)=x_1.
\]
\end{enumerate}
The forward map is
\begin{equation}\label{eq:dictionary-forward}
b=-\frac{X}{4}-2-\frac{16a^2}{X^2},\qquad
x_1=\frac{-4a-Y}{2X},\qquad
x_2=\frac{-4a+Y}{2X}.
\end{equation}
The inverse map is
\begin{equation}\label{eq:dictionary-inverse}
(b,x_1,x_2)\longmapsto \left(X,Y\right)=\left(-\frac{4a}{x_1+x_2},\,\frac{4a(x_1-x_2)}{x_1+x_2}\right).
\end{equation}
\end{theorem}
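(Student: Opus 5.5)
The plan is to obtain the dictionary by composing three bijections that are already available: the conjugacy of Proposition~\ref{prop:conjugacy}, the period-$2$ parametrization of Proposition~\ref{prop:nfii-period-two} with $A=a^2$, and the change of model \eqref{eq:Ea-change} from $E^{(a^2)}$ to $E_a$. Since $a\neq0$, we have $A=a^2\neq0$. Also $27A+16=27a^2+16>0$, so Proposition~\ref{prop:nfii-period-two} applies for every $a\in\Q^\times$.

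First, the map $M(z)=az$ gives a bijection between triples $(b,x_1,x_2)$ with $x_1\neq x_2$ forming a $2$-cycle of $f_{b,a}$ and triples $(B,z_1,z_2)=(b,x_1/a,x_2/a)$ forming a $2$-cycle of $\psi_{a,b}$. This works because $M^{-1}\circ f_{b,a}\circ M=\psi_{a,b}$ and $M$ is a bijection of $\Q$ that preserves distinctness.

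Second, Proposition~\ref{prop:nfii-period-two} matches these triples bijectively with points $(X_0,Y_0)\in E^{(a^2)}(\Q)$ having $X_0\neq0$ and $Y_0\neq0$.

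Third, the substitution $X=-4X_0$, $Y=(4/a)Y_0$ is an isomorphism $E^{(a^2)}\to E_a$ defined over $\Q$. It sends the point at infinity to $\mathcal O$. Because both coordinates are rescaled by nonzero constants, it maps the locus $X_0\neq0$, $Y_0\neq0$ exactly onto the locus $X\neq0$, $Y\neq0$. Composing the three bijections gives the asserted bijection between sets (1) and (2).

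It then remains to check that the composite maps are the displayed formulas \eqref{eq:dictionary-forward} and \eqref{eq:dictionary-inverse}.

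For the forward direction (from points to triples), substitute $X_0=-X/4$ and $Y_0=aY/4$ into \eqref{eq:nfii-backward} and multiply the $z$-coordinates by $a$. For $b$, this gives
\[
B=-\frac{X}{4}-2-\frac{a^2}{X^2/16}=-\frac{X}{4}-2-\frac{16a^2}{X^2}.
\]
For the cycle points, it gives
\[
x_1=a\Bigl(\frac{1}{2X_0}+\frac{Y_0}{2a^2X_0}\Bigr)=-\frac{2a}{X}-\frac{Y}{2X}=\frac{-4a-Y}{2X},
\]
and $x_2$ is obtained the same way with the sign of $Y$ reversed.

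For the inverse direction, \eqref{eq:nfii-forward} with $z_i=x_i/a$ gives
\[
X_0=\frac{a}{x_1+x_2},\qquad Y_0=\frac{a^2(x_1-x_2)}{x_1+x_2}.
\]
Applying $X=-4X_0$ and $Y=(4/a)Y_0$ then yields \eqref{eq:dictionary-inverse}.

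Two small points should be made explicit. The first is that $x_1+x_2\neq0$ for every $2$-cycle, so the inverse formula is well defined. This follows from the argument preceding \eqref{eq:nfii-sum}, which shows $s=z_1+z_2\neq0$. The second is that the two displayed maps really are mutually inverse. This is inherited from the composition, but it can also be confirmed directly: from \eqref{eq:dictionary-forward} one has $x_1+x_2=-4a/X$ and $x_1-x_2=-Y/X$, and substituting these into \eqref{eq:dictionary-inverse} recovers $(X,Y)$.

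I expect no real obstacle here. The only care needed is in tracking the signs and the factors of $a$ through the rescaling, particularly the factor $4/a$ in $Y$, so that the formula for $x_1$ comes out with $-Y$ rather than $+Y$.
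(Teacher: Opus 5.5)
Your proposal is correct and is essentially the paper's proof. You compose the conjugacy $x=az$, Proposition~\ref{prop:nfii-period-two} with $A=a^2$ (which applies since $27a^2+16>0$), and the rescaling \eqref{eq:Ea-change}, then track the formulas; the only cosmetic difference is that you get $x_1+x_2\neq0$ from $s=z_1+z_2\neq0$, whereas the paper re-derives it by adding the two cycle equations to obtain $0=2a$.
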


\begin{proof}
Let $(X,Y)\in E_a(\Q)$ with $X,Y\neq 0$ and set
\[
X_0=-\frac{X}{4},\qquad Y_0=\frac{aY}{4}.
\]
By \eqref{eq:Ea-change}, the point $(X_0,Y_0)$ lies on $E^{(a^2)}(\Q)$ and still satisfies $X_0,Y_0\neq 0$. Applying Proposition~\ref{prop:nfii-period-two} with $A=a^2$ gives a rational $2$-cycle
\[
z_1=\frac{1}{2X_0}+\frac{Y_0}{2a^2X_0},\qquad
z_2=\frac{1}{2X_0}-\frac{Y_0}{2a^2X_0}
\]
for the map $\psi_{a,b}(z)=a^2z^3+bz+1$, where
\[
b=X_0-2-\frac{a^2}{X_0^2}.
\]
Multiplying by $a$ and simplifying yields \eqref{eq:dictionary-forward}. Since $f_{b,a}=M\circ \psi_{a,b}\circ M^{-1}$ with $M(z)=az$, the pair $(x_1,x_2)=(az_1,az_2)$ is a rational $2$-cycle of $f_{b,a}$.

Conversely, suppose that $f_{b,a}$ admits a rational $2$-cycle $(x_1,x_2)$. First note that \(x_1+x_2\neq0\): if \(x_2=-x_1\), then adding the equations
\[
x_2=f_{b,a}(x_1),\qquad x_1=f_{b,a}(x_2)
\]
would give \(0=2a\), contradicting \(a\neq0\). Set
\[
z_1=\frac{x_1}{a},\qquad z_2=\frac{x_2}{a}.
\]
Then $(z_1,z_2)$ is a rational $2$-cycle for $\psi_{a,b}$. Proposition~\ref{prop:nfii-period-two} therefore gives a point $(X_0,Y_0)\in E^{(a^2)}(\Q)$ with
\[
X_0=\frac{1}{z_1+z_2}=\frac{a}{x_1+x_2},\qquad
Y_0=a^2\frac{z_1-z_2}{z_1+z_2}=a^2\frac{x_1-x_2}{x_1+x_2}.
\]
Applying \eqref{eq:Ea-change} gives \eqref{eq:dictionary-inverse}. The two constructions are inverse to each other.
\end{proof}

\begin{remark}\label{rem:dictionary-degeneracies}
Replacing $Y$ by $-Y$ in \eqref{eq:dictionary-forward} exchanges $x_1$
and $x_2$, so the two points $(X,\pm Y)$ represent the two orderings of
the same unordered cycle. If $Y=0$, the formulas instead give
$x_1=x_2=-2a/X$. The curve equation then gives
$f_{b,a}(x_1)=x_1$ and $f'_{b,a}(x_1)=-1$: these are the fixed points
arising from the degenerate period-$2$ equations.

The points with $X=0$ are exactly $(0,\pm4a)$ and are excluded from the
dictionary. No rational $2$-cycle has such an image, since
$X=-4a/(x_1+x_2)\neq0$ in \eqref{eq:dictionary-inverse}.
\end{remark}

\begin{example}\label{ex:dictionary-first-example}
The point $(-4,4a)$ lies on $E_a(\Q)$ for every $a\in\Q^\times$. It is in fact the double of the distinguished point \(P_a=(0,-4a)\): direct doubling gives \([2]P_a=(-4,4a)\). Applying \eqref{eq:dictionary-forward} gives
\[
b=-1-a^2,\qquad (x_1,x_2)=(a,0).
\]
A direct check shows that
\[
f_{-1-a^2,a}(a)=0,\qquad f_{-1-a^2,a}(0)=a,
\]
so this indeed yields a rational $2$-cycle of $f_{-1-a^2,a}$.
For example, when \(a=1\) this gives \(b=-2\) and the rational
\(2\)-cycle \((1,0)\) of \(x^3-2x+1\).
\end{example}

\section{Infinite Order of the Distinguished Section and Dynamical Consequences}\label{sec:rank}

The rank assertion follows from Nagell--Lutz integrality.

\begin{theorem}\label{thm:Pt-infinite}
For every $t\in\Q^\times$, the point $P_t=(0,-4t)$ has infinite order on
\begin{equation}\label{eq:Et-family}
\mathcalE_t:\quad Y^2=X^3+4X^2+16t^2.
\end{equation}
In particular, $\rank\mathcalE_t(\Q)\ge1$.
\end{theorem}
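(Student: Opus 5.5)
The plan is to reduce to an integral Weierstrass model and apply the Nagell--Lutz theorem: if $P_t$ were torsion, every multiple of it would have integral coordinates on that model. I will exhibit one multiple, namely $[6]P_t$, whose $x$-coordinate is not integral. Write $t=m/n$ with $m,n\in\Z$, $n\ge1$, $\gcd(m,n)=1$ and $m\neq0$. The substitution $X=x/n^2$, $Y=y/n^3$ transforms \eqref{eq:Et-family} into
\[
y^2=x^3+4n^2x^2+16m^2n^4,
\]
which has integer coefficients and nonzero discriminant $-4096\,m^2n^{10}(27m^2+16n^2)$. In this model $P_t$ becomes $(0,-4mn^2)$. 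By Example~\ref{ex:dictionary-first-example}, $[2]P_t$ becomes $(-4n^2,4mn^2)$. Both points are integral, so low multiples give no contradiction, and we must go further. I will not invoke Mazur's theorem here, so that the argument stays independent of the later torsion sections.

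Next I compute $x([3]P_t)=x(P_t+[2]P_t)$ and $x([6]P_t)=x([2][3]P_t)$ explicitly as rational functions of $(m,n)$ using the chord--tangent formulas. The slope from $P_t$ to $[2]P_t$ on the scaled model is $(4mn^2+4mn^2)/(-4n^2)=-2m$. This gives
\[
x([3]P_t)=4m^2-4n^2+4n^2=4m^2,
\]
up to the precise bookkeeping, which I will verify. So $[3]P_t$ is probably still integral; this explains why the paper needs the sixth multiple. I then apply the doubling formula
\[
x([2]Q)=\frac{x^4-8\cdot16m^2n^4x-\dots}{4y^2}
\]
to $Q=[3]P_t$. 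That is, I write $x([6]P_t)=N(m,n)/D(m,n)$ with $D=4y([3]P_t)^2$, and I compute $y([3]P_t)^2=64m^6+64m^4n^2+16m^2n^4=16m^2(4m^4+4m^2n^2+n^4)$, which simplifies to $16m^2(2m^2+n^2)^2$.

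The key step, and the main obstacle, is to show that $D\nmid N$ for every coprime pair $(m,n)$ with $m\neq0$. I will bound $\gcd(N,D)$ by a resultant computation: the resultant of $N$ and $D$ as binary forms is a constant times a power of $mn$. Combined with coprimality, this shows that the only primes that can cancel are those dividing $2m$ or $2m^2+n^2$, and each of these appears only to a controlled power. I then compare valuations prime by prime. I first try a prime $p\mid 2m^2+n^2$ when such a $p$ is odd, and the $2$-adic valuation otherwise, to show that $v_p(D)>v_p(N)$ for at least one prime. The delicate cases are small values where $2m^2+n^2$ is a power of $2$ (for instance $m=\pm1$, $n=0$ is excluded, and $2m^2+n^2=2$ is impossible for $n\ge1$). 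These I handle by a direct $2$-adic and $m$-adic valuation check.

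Once $x([6]P_t)\notin\Z$ is established, Nagell--Lutz implies that $[6]P_t$ is not torsion, hence neither is $P_t$. Then $\rank\mathcalE_t(\Q)\ge1$ follows immediately.
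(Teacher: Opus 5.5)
Your proposal is correct and follows the paper's proof: it uses the same integral model $y^2=x^3+4n^2x^2+16m^2n^4$, the same computation $[3]P_t=(4m^2,4md)$ with $d=2m^2+n^2$, and the same Nagell--Lutz contradiction from $x([6]P_t)=4(m^6-n^4d)/d^2$. The resultant detour is unnecessary, and as literally stated it is degenerate because $N$ and $D$ share the factor $m^2$; instead, $m^6-n^4d\equiv m^6 \pmod d$ is prime to $d$ because $\gcd(m,d)=\gcd(m,n^2)=1$, so integrality would force $d^2\mid 4$, which is impossible since $d\ge3$ --- and this also disposes of your ``power of $2$'' cases at once.
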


\begin{proof}
Write $t=m/n$ with $m\in\Z\setminus\{0\}$, $n\in\Z_{>0}$,
and $\gcd(m,n)=1$. The change of variables $(x,y)=(n^2X,n^3Y)$
is a $\Q$-isomorphism to the nonsingular integral model
\[
 y^2=x^3+4n^2x^2+16m^2n^4,
 \qquad \widetilde P=(0,-4mn^2).
\]
Set $d=2m^2+n^2$. The addition law gives
\[
 [3]\widetilde P=(4m^2,4md),\qquad
 x([6]\widetilde P)=\frac{4(m^6-n^4d)}{d^2},
\]
where $[6]\widetilde P\ne\mathcal O$ because $4md\ne0$.
Since $\gcd(m,d)=1$, we have $\gcd(m^6-n^4d,d)=1$.
Thus integrality of $x([6]\widetilde P)$ would force $d^2\mid4$,
contrary to $d\ge3$. By Nagell--Lutz integrality
\cite[Theorem~2.5, p.~56]{silvermantate2015rational}, every finite rational
torsion point on this integral equation has integral coordinates.
Hence $[6]\widetilde P$, and therefore $P_t$, has infinite order.
The subgroup generated by $P_t$ is isomorphic to $\Z$, proving the rank claim.
\end{proof}

\begin{corollary}\label{cor:infinitely-many-b}
Fix $a\in\Q^\times$. Then there exist infinitely many rational numbers $b$ such that
\[
f_{b,a}(x)=x^3+bx+a
\]
admits a rational $2$-cycle.
\end{corollary}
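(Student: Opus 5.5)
The plan is to combine Theorem~\ref{thm:Pt-infinite} (with $t=a$) and the birational dictionary, Theorem~\ref{thm:dictionary}. Since $P_a=(0,-4a)$ has infinite order on $E_a$, the multiples $[n]P_a$ for $n\in\Z$ are pairwise distinct rational points. We show that all but finitely many of them lie in the domain of the dictionary. We then check that the map from such points to $b$ has finite fibres, so infinitely many points give infinitely many values of $b$.

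First, the exceptional locus $\{\mathcal O\}\cup\{X=0\}\cup\{Y=0\}$ is finite. The condition $X=0$ gives only $(0,\pm4a)$. The condition $Y=0$ gives at most three points, namely the roots of $X^3+4X^2+16a^2$. (These are $2$-torsion points, so no nonzero multiple of $P_a$ equals one of them.) Hence the set $\{[n]P_a : n\ge1\}$ minus finitely many elements is an infinite subset of the domain in item~(1) of Theorem~\ref{thm:dictionary}. Each such point gives a triple $(b,x_1,x_2)$ with a rational $2$-cycle for $f_{b,a}$, where
\[
b=-\frac{X}{4}-2-\frac{16a^2}{X^2}.
\]

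Next, we bound the fibres of $Q\mapsto b$. For fixed $b$, clearing denominators in the formula above gives
\[
X^3+4(b+2)X^2+64a^2=0,
\]
a nonzero cubic in $X$. It therefore has at most three roots, and each root gives at most two points $(X,\pm Y)$ on $E_a$. So each value of $b$ arises from at most six points. An infinite set of points thus produces infinitely many distinct $b$, which proves the corollary. Alternatively, the bijectivity in Theorem~\ref{thm:dictionary} already gives infinitely many triples. For each $b$ there are only finitely many triples, since $x_1$ is a root of $f_{b,a}^{\circ 2}(x)-x$, which has degree $9$; this yields the same conclusion.

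There is no substantive obstacle. The only point needing care is the finite-fibre step, which ensures that distinct points do not collapse onto finitely many values of $b$; the polynomial degree bound above settles it.
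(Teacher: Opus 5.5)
Your proposal is correct and follows essentially the paper's proof: the infinite order of $P_a$ together with the dictionary, and the same fibre bound via the cubic $X^3+4(b+2)X^2+64a^2=0$, which gives at most six points per value of $b$. The only difference is cosmetic: you discard the finite exceptional locus wholesale, whereas the paper checks directly that every $[n]P_a$ with $n\ge2$ avoids $X=0$ and $Y=0$.
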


\begin{proof}
By Theorem~\ref{thm:Pt-infinite}, the points
$[n]P_a=(X_n,Y_n)$ for $n\ge2$ are distinct and finite.
If $X_n=0$, then $[n]P_a$ is one of the two points of $E_a(\Q)$ with $X$-coordinate $0$, namely $\pm P_a$. Thus either $(n-1)P_a=\mathcal O$ or $(n+1)P_a=\mathcal O$, contradicting the infinite order of $P_a$. If $Y_n=0$, then $[n]P_a$ is a rational $2$-torsion point, so $[2n]P_a=\mathcal O$, again a contradiction.

Therefore Theorem~\ref{thm:dictionary} applies to every $[n]P_a$ and yields
\[
b_n=-\frac{X_n}{4}-2-\frac{16a^2}{X_n^2}\in\Q
\]
such that $f_{b_n,a}$ admits a rational $2$-cycle.

For any fixed $b_0\in\Q$, the indices satisfying $b_n=b_0$ have
\[
X_n^3+4(b_0+2)X_n^2+64a^2=0.
\]
This cubic has at most three roots, and each root is the $X$-coordinate
of at most two points on $E_a$. Since the multiples $[n]P_a$ are
distinct, each value $b_0$ occurs for at most six indices. The infinite
sequence of multiples therefore gives infinitely many distinct values
$b_n$, as required.
\end{proof}

\section{The Elliptic Family \texorpdfstring{$\mathcalE_t$}{Et}}\label{sec:elliptic-family}

\subsection{Basic invariants}\label{subsec:family-invariants}

We now determine the torsion term in Theorem~\ref{thm:main-family}.
Under the dictionary of Theorem~\ref{thm:dictionary}, rational points of
\(E_a\) on the excluded divisor \(Y=0\) are exactly its rational points of
order \(2\). The analysis therefore identifies uniformly when this
degenerate divisor has rational points, and then excludes every possible
higher torsion order. The proof uses a Prym quotient at level \(5\) and
two-cover descent with elliptic Chabauty at level \(7\). Since the fiber at
\(t=0\) is singular, we restrict throughout to \(t\in\Q^\times\).

We regard \eqref{eq:Et-family} as a generalized Weierstrass equation with
\[
a_1=a_3=a_4=0,\qquad a_2=4,\qquad a_6=16t^2.
\]
The associated standard invariants are therefore
\begin{equation}\label{eq:Et-bi}
b_2=16,\qquad b_4=0,\qquad b_6=64t^2,\qquad b_8=256t^2,
\end{equation}
and hence
\begin{equation}\label{eq:Et-ci}
c_4=256,\qquad c_6=-4096-13824t^2.
\end{equation}
From \eqref{eq:Et-bi} and \eqref{eq:Et-ci} we obtain
\begin{equation}\label{eq:Et-discriminant}
\Delta(\mathcalE_t)
=-b_2^2b_8-8b_4^3-27b_6^2+9b_2b_4b_6
=-4096t^2(16+27t^2),
\end{equation}
and therefore
\begin{equation}\label{eq:Et-j}
j(\mathcalE_t)=\frac{c_4^3}{\Delta(\mathcalE_t)}
=-\frac{4096}{t^2(16+27t^2)}.
\end{equation}
Because $16+27t^2>0$ for every $t\in\Q$, the discriminant vanishes if and only if $t=0$. Thus \eqref{eq:Et-family} defines an elliptic curve precisely for $t\in\Q^\times$.

For later reference, note also that
\[
j(\mathcalE_t)<0\qquad\text{for every }t\in\Q^\times.
\]
In particular, the rational specializations in this family never have $j$-invariant $0$ or $1728$.

\subsection{Rational 2-torsion}\label{subsec:family-2torsion}

\begin{proposition}\label{prop:Et-2torsion}
Let $t\in\Q^\times$. Then $\mathcalE_t(\Q)$ has a nontrivial rational point of order $2$ if and only if there exists $u\in\Q^\times$ such that
\begin{equation}\label{eq:Et-2torsion-param}
t=\frac{u(u^2+4)}{4}.
\end{equation}
In that case the unique nontrivial rational point of order $2$ is
\begin{equation}\label{eq:Et-2torsion-point}
T_u=\bigl(-(u^2+4),0\bigr),
\end{equation}
and therefore
\[
\mathcalE_t(\Q)[2]\cong \Z/2\Z.
\]
If no such $u$ exists, then $\mathcalE_t(\Q)[2]$ is trivial.
\end{proposition}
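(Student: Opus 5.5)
Rational points of order $2$ on $\mathcalE_t$ are exactly the points $(x_0,0)$ with $x_0\in\Q$ a root of the cubic $g(X)=X^3+4X^2+16t^2$. So the plan is to parametrize the rational roots of $g$ directly.

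\emph{Sufficiency.} Suppose $x_0$ is a rational root. Then $x_0\neq0$, since $t\neq0$. Also $-x_0^2(x_0+4)=16t^2>0$, so $x_0<-4$. Write $x_0=-(u^2+4)$. Then $x_0+4=-u^2$ and $16t^2=u^2(u^2+4)^2$. The quantity $u^2=-(x_0+4)=16t^2/x_0^2$ is the square of the rational number $4t/x_0$. So we may take $u=\pm 4t/x_0\in\Q^\times$, with the sign chosen so that $4t=u(u^2+4)$. This gives \eqref{eq:Et-2torsion-param}.

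\emph{Necessity.} If $t=u(u^2+4)/4$, then the direct substitution
\[
(-(u^2+4))^3+4(u^2+4)^2=(u^2+4)^2\cdot(-u^2)=-u^2(u^2+4)^2=-16t^2
\]
shows that $T_u$ lies on $\mathcalE_t$.

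\emph{Uniqueness.} This is the only point needing care. Factor
\[
g(X)=(X+u^2+4)\bigl(X^2-u^2X+u^2(u^2+4)/4\bigr),
\]
where the constant term of the quadratic factor is forced by $16t^2=u^2(u^2+4)^2/(u^2+4)\cdot\ldots$ after matching coefficients. The exact coefficients are to be checked by expanding. The quadratic factor has discriminant $u^4-u^2(u^2+4)=-4u^2<0$. Hence it has no real roots, so $g$ has exactly one rational root and $\mathcalE_t(\Q)[2]\cong\Z/2\Z$.

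\emph{Uniqueness of $u$.} This follows from the same uniqueness of the root: $x_0$ determines $u^2$, and the sign of $u$ is fixed by the sign of $t$, since $u(u^2+4)$ has the sign of $u$.

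If no such $u$ exists, $g$ has no rational root, so $\mathcalE_t(\Q)[2]$ is trivial.

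The main obstacle is only the bookkeeping in the factorization. Alternatively, uniqueness follows without factoring: $j<0$, equivalently $\Delta<0$, so $g$ has exactly one real root, and hence at most one rational root. I would use this discriminant argument, citing \eqref{eq:Et-discriminant}.
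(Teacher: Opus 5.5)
Your proposal is correct and follows the paper's proof essentially step for step: you parametrize a rational root $r$ through $-(r+4)=(4t/r)^2$, verify the converse by direct substitution, and get uniqueness from the negative discriminant $-256t^2(16+27t^2)$ of the cubic, exactly as the paper does. In the factorization route you set aside, the quadratic factor is actually $X^2-u^2X+u^2(u^2+4)$, with no division by $4$, and its discriminant is $-u^2(3u^2+16)<0$; so that alternative also works once the constant term is corrected.
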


\begin{proof}
A rational point on \eqref{eq:Et-family} has order $2$ if and only if it has the form $(r,0)$ with
\begin{equation}\label{eq:Et-2torsion-cubic}
r^3+4r^2+16t^2=0.
\end{equation}
Equivalently,
\begin{equation}\label{eq:Et-2torsion-square}
16t^2=-r^2(r+4).
\end{equation}
Since $t\neq 0$, we also have $r\neq 0$, and \eqref{eq:Et-2torsion-square} gives $-(r+4)=(4t/r)^2$. Write
\[
-(r+4)=u^2,\qquad u\in\Q^\times.
\]
Then
\[
r=-(u^2+4),
\]
and substituting into \eqref{eq:Et-2torsion-square} gives
\[
16t^2=u^2(u^2+4)^2.
\]
Hence
\[
t=\pm \frac{u(u^2+4)}{4}.
\]
Replacing $u$ by $-u$ if necessary yields \eqref{eq:Et-2torsion-param}.

Conversely, if \eqref{eq:Et-2torsion-param} holds, then substituting $X=-(u^2+4)$ into the right-hand side of \eqref{eq:Et-family} gives
\[
X^3+4X^2+16t^2=-(u^2+4)^3+4(u^2+4)^2+u^2(u^2+4)^2=0,
\]
so \eqref{eq:Et-2torsion-point} is indeed a rational point of order $2$.

It remains to prove uniqueness. Writing $\Delta(\mathcalE_t)$ for the
elliptic-curve discriminant in \eqref{eq:Et-discriminant}, the
cubic-polynomial discriminant of $F_t(X):=X^3+4X^2+16t^2$ is
\[
\operatorname{disc}(F_t)=-256t^2(16+27t^2)
=\frac{1}{16}\Delta(\mathcalE_t).
\]
For every $t\in\Q^\times$, this polynomial discriminant is negative.
Therefore the real cubic \(F_t\) has exactly one real root. Hence it has at
most one rational root, and
$\mathcalE_t(\Q)$ has at most one nontrivial rational $2$-torsion point. Thus
the rational $2$-torsion subgroup is either trivial or isomorphic to
$\Z/2\Z$.
\end{proof}

Mazur's theorem \cite{mazur1977modular,mazur1978rational} now reduces the
classification to four torsion orders. The possible groups over $\Q$ are
$\Z/N\Z$ with $1\leq N\leq10$ or $N=12$, and
$\Z/2\Z\times\Z/2N\Z$ with $1\leq N\leq4$.
Proposition~\ref{prop:Et-2torsion} excludes the latter groups. Among the
cyclic groups, orders $6$, $9$, $10$, and $12$ entail a point of order $3$
or $5$, and order $8$ entails a point of order $4$. It therefore suffices
to exclude orders $3$, $5$, $7$, and $4$, as we do in
Sections~\ref{sec:torsion-3}--\ref{sec:torsion-2primary}.

\section{Exclusion of Rational 3-Isogenies}\label{sec:torsion-3}

\begin{theorem}\label{thm:no-3-torsion}
For every $t\in\Q^\times$, the elliptic curve $\mathcalE_t$ admits no rational $3$-isogeny. In particular, $\mathcalE_t(\Q)$ has no point of order $3$.
\end{theorem}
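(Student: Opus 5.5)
Since the kernel of a rational $3$-isogeny is a Galois-stable subgroup $\{\mathcal O,P,-P\}$, the $x$-coordinate of $P$ is fixed by Galois. The plan is therefore to show that the $3$-division polynomial of $\mathcalE_t$ has no rational root for $t\in\Q^\times$. This also rules out rational $3$-torsion.

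From \eqref{eq:Et-bi}, the $3$-division polynomial is
\[
\psi_3(X)=3X^4+b_2X^3+3b_4X^2+3b_6X+b_8=3X^4+16X^3+192t^2X+256t^2 .
\]
Substituting $X=4z$ gives $\psi_3=256\bigl(3z^4+4z^3+t^2(3z+1)\bigr)$. So a rational root corresponds to $z\in\Q$ with
\[
t^2(3z+1)=-z^3(3z+4).
\]
I would first dispose of the degenerate values:
\begin{itemize}[leftmargin=*]
\item $z=0$ forces $t=0$;
\item $z=-1/3$ gives $-z^3(3z+4)=1/9\neq0$, which is impossible;
\item $z=-4/3$ forces $t=0$.
\end{itemize}
For the remaining $z$, multiply by $(3z+1)/z^2$ and set $w=t(3z+1)/z$. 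This gives a rational point on
\[
C:\quad w^2=-z(3z+1)(3z+4)
\]
with $w\neq0$. The theorem therefore reduces to showing $C(\Q)=\{\mathcal O,(0,0),(-1/3,0),(-4/3,0)\}$, that is, $C(\Q)=C(\Q)[2]$.

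Next I would put $C$ in Weierstrass form. Setting $z=-x/3$ gives $w^2=\tfrac{1}{3}x(x-1)(x-4)$, so after scaling $C$ becomes the quadratic twist by $3$ of $y^2=x(x-1)(x-4)$. It can be written as $y^2=x(x-3)(x-12)$, with full rational $2$-torsion.

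The main step is to prove $\rank C(\Q)=0$. Because all $2$-torsion is rational, I would use the elementary complete $2$-descent. The relevant map is
\[
C(\Q)/2C(\Q)\hookrightarrow(\Q^\times/\Q^{\times2})^2,\qquad (x,y)\mapsto(x,\;x-3),
\]
with image inside the subgroup generated by $-1,2,3$. For each candidate pair $(d_1,d_2)$, I would write down the quartic system $d_1u^2-d_2v^2=3$, $d_1u^2-d_1d_2w^2=12$. I would then eliminate all pairs not coming from torsion by real solvability and local conditions at $2$ and $3$. The goal is to show the image has order exactly $4$, the image of $C(\Q)[2]$, which gives rank $0$. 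I expect this $2$- and $3$-adic case analysis to be the main obstacle. As an independent check, I would identify $C$ in Cremona's tables (it has small conductor, dividing $2^a3^b$) and confirm rank $0$ with a Magma/Sage certificate.

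Finally, I would show the torsion subgroup is exactly $\Z/2\times\Z/2$. Reducing modulo a small good prime such as $5$ or $7$ bounds $\#C(\Q)_{\mathrm{tors}}$. Alternatively, one checks that no $2$-torsion point is halvable in $C(\Q)$, which excludes $\Z/2\times\Z/4$ and $\Z/2\times\Z/8$, and uses reduction to exclude $\Z/2\times\Z/6$. Hence every rational point of $C$ has $w=0$, so no $t\in\Q^\times$ yields a rational root of $\psi_3$, and $\mathcalE_t$ has no rational $3$-isogeny.
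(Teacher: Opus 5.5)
Your proposal is correct and is essentially the paper's proof. Your curve $y^2=x(x-3)(x-12)$ is the paper's auxiliary curve $E'\colon S^2=(R-7)(R+5)(R+2)$ under $x=R+5$; in fact the paper's point $(R,S)$ is exactly your point plus the $2$-torsion point $(3,0)$. The only difference is that you replace the paper's certified PARI computation of $E'(\Q)$ by a hand argument, and that argument works. The real condition gives $d_1>0$. The $3$-adic Kummer image has order $\#C(\Q_3)/2C(\Q_3)=\#C(\Q_3)[2]=4$, so it equals the torsion image $H=\{(1,1),(1,-3),(3,-3),(3,1)\}$. This leaves only the coset $(1,-2)H$, which fails at $2$: $u^2+2v^2=3$ forces $u\in\Z_2^\times$, and then $12-u^2$ is a $2$-adic unit, not of the form $2w^2$. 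Finally, $\#\widetilde C(\mathbb{F}_5)=4$ and $\#\widetilde C(\mathbb{F}_7)=8$ give $C(\Q)_{\mathrm{tors}}\cong(\Z/2\Z)^2$.
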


A rational $3$-isogeny would give a rational root of the $3$-division
polynomial. We map such a root to an auxiliary elliptic curve, whose
rational points we determine first.

\begin{proposition}\label{prop:3torsion-auxiliary}
The elliptic curve
\begin{equation}\label{eq:3torsion-Eprime}
E':\quad S^2=R^3-39R-70=(R-7)(R+5)(R+2)
\end{equation}
satisfies
\[
E'(\Q)=\{\mathcal O,\,(-5,0),\,(-2,0),\,(7,0)\}.
\]
\end{proposition}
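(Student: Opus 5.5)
We need to show that $E'(\Q)$ is exactly the full $2$-torsion, i.e.\ that the rank is $0$ and there is no torsion of odd order or of order $4$ or $8$. The plan is to treat torsion and rank separately. Torsion will come from reduction modulo small primes of good reduction. Rank will come from a complete $2$-descent via the full rational $2$-torsion.

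\textbf{Torsion.} The roots $7,-5,-2$ have pairwise differences $12$, $9$ and $3$. So the cubic has discriminant $(12\cdot 9\cdot 3)^2=324^2$, and $\Delta(E')=16\cdot 324^2$ is supported only at $2$ and $3$. Hence $E'$ has good reduction at $5$ and $7$.
\begin{itemize}
\item Counting points on $S^2=R^3-39R-70$ over $\mathbb F_5$ and $\mathbb F_7$ gives $\#\widetilde E'(\mathbb F_5)$ and $\#\widetilde E'(\mathbb F_7)$. Each count is divisible by $4$, because the full $2$-torsion reduces injectively.
\item Since prime-to-$p$ torsion injects under reduction, the greatest common divisor of the two counts bounds $\#E'(\Q)_{\mathrm{tors}}$.
\item I expect this gcd to be $8$. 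In that case I rule out $\Z/2\Z\times\Z/4\Z$ directly. A point $Q$ with $2Q=(e,0)$ exists if and only if $e-e'$ and $e-e''$ are both rational squares, where $e',e''$ are the other two roots.
\item For $e=7$ the differences are $12$ and $9$. For $e=-5$ they are $-12$ and $-3$. For $e=-2$ they are $-9$ and $3$.
\item In every case at least one difference is a nonsquare, so no $2$-torsion point is divisible by $2$. Hence $E'(\Q)_{\mathrm{tors}}=(\Z/2\Z)^2$.
\end{itemize}

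\textbf{Rank.} I will use the standard complete $2$-descent map
\[
E'(\Q)\to (\Q^\times/\Q^{\times2})^2,\qquad (R,S)\mapsto (R-7,\,R+5),
\]
with the usual modifications at the $2$-torsion points. Its image is contained in pairs $(d_1,d_2)$ with $d_i$ squarefree and supported on $\{-1,2,3\}$. For each such pair we must decide whether the system
\[
d_1z_1^2-d_2z_2^2=-12,\qquad d_1z_1^2-d_1d_2z_3^2=-9
\]
has a rational solution. The four $2$-torsion points account for a subgroup of order $4$ in the image. The rank is $0$ exactly when every other pair is eliminated. The main tools for elimination are:
\begin{itemize}
\item sign conditions at the real place, which force $R\ge 7$ or $-5\le R\le -2$;
\item congruence or Hilbert-symbol obstructions at $3$ and at $2$.
\end{itemize}

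The main obstacle is this last bookkeeping over the $64$ candidate pairs. To reduce the work, I would first try the descent via the $2$-isogeny with kernel $(7,0)$. After shifting $R\mapsto R+7$, the curve becomes $S^2=R(R^2+21R+108)$, and its isogenous curve has coefficients involving $a^2-4b=441-432=9$. Both Selmer groups are then small, and showing $\#\mathrm{Sel}^{\phi}\cdot\#\mathrm{Sel}^{\hat\phi}=4\cdot 2$ or less gives rank $0$. I expect the only genuine local obstructions to occur at $3$. If any class resists elimination by hand, I would instead cite an $L$-function computation: a nonvanishing $L(E',1)$ together with Kolyvagin's theorem gives rank $0$. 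This can also be certified in Magma, consistent with the paper's computational appendix.
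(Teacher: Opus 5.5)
Your route is correct, but it is genuinely different from the paper's. The paper proves this proposition only by citing a certified SageMath/PARI computation: rank $0$ in proof mode, and torsion $(\Z/2\Z)^2$. You give a hand proof, using reduction at good primes for the torsion and a $2$-isogeny descent for the rank. This is the same style the paper itself uses for $J_4$ in Appendix~\ref{app:C4-descent}. It trades the paper's brevity for independence from the software certificate.

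Your isogeny descent does close. On $y^2=x(x^2+21x+108)$ the torsion images are $\{\pm1,\pm3\}$. The coset $\{\pm2,\pm6\}$ is excluded because $N^2=2M^4+21M^2e^2+54e^4$ has no primitive $2$-adic solution: the right side is $\equiv5\pmod 8$ if $Me$ is odd, and has $2$-adic valuation $1$ otherwise. On the isogenous curve $y^2=x(x^2-42x+9)$, the classes $-1$ and $-3$ fail at $\infty$. For the class $3$, one gets $N=3N'$ and $3N'^2=M^4-14M^2e^2+e^4$. The right side is $\equiv1\pmod 3$ if $3\mid Me$, and $\equiv6\pmod 9$ otherwise. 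So the two Selmer groups have orders $4$ and $1$, and $r=0$.

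Three slips should be fixed.
\begin{enumerate}
\item The point counts are $\#\widetilde{E'}(\mathbb F_5)=4$ and $\#\widetilde{E'}(\mathbb F_7)=8$, so the gcd is $4$, not $8$. The torsion is therefore $(\Z/2\Z)^2$ at once, and your halving check, though correct, is unnecessary.
\item More seriously, your rank-zero threshold is off by a factor of $2$. Write $\mathrm{Sel}$ for the Selmer group on the first curve and $\mathrm{Sel}'$ for the one on the isogenous curve. The bound is $2^r\le \#\mathrm{Sel}\cdot\#\mathrm{Sel}'/4$. The smallest possible product here is $4\cdot1$, because the image of $(0,0)$ on the isogenous curve is $9\equiv1$. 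A product of $8$ would give only $r\le1$.
\item The obstruction for the coset $\{\pm2,\pm6\}$ is $2$-adic, not $3$-adic as you expect. The class $2$ is $3$-adically soluble: $(M,e)=(3,2)$ gives $N^2=81\cdot22$, and $22\equiv1\pmod 3$ is a $3$-adic square. Looking only at $3$ would leave you stuck.
\end{enumerate}
Once the descent is done correctly, the $L$-function fallback is not needed.
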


\begin{proof}
The three affine points are the rational $2$-torsion points visible from the
factorization in \eqref{eq:3torsion-Eprime}. The SageMath certificate
recorded in Appendix~\ref{app:verification} computes the rank with PARI \cite{pari},
with proof mode enabled and database lookup disabled, and returns rank \(0\).
It computes the torsion subgroup as \((\Z/2\Z)^2\), with torsion points
\[
(-5,0),\quad (-2,0),\quad (7,0),
\]
together with the point at infinity. The certified rank-zero result and
torsion computation therefore give the stated group $E'(\Q)$.
\end{proof}

\begin{proof}[Proof of Theorem~\ref{thm:no-3-torsion}]
Let $t\in\Q^\times$. The $3$-division polynomial of
\eqref{eq:Et-family} is
\begin{equation}\label{eq:3torsion-psi3}
\psi_3(x)=3x^4+16x^3+192t^2x+256t^2
=x^3(3x+16)+64t^2(3x+4).
\end{equation}
A rational $3$-isogeny has a Galois-stable kernel
$\{\mathcal O,Q,-Q\}$. Since $Q$ and $-Q$ have the same
$x$-coordinate, this coordinate is rational and is a root of $\psi_3$
\cite[Chapter~III, Exercise~3.7]{silverman2009arithmetic}.
We show that $\psi_3$ has no rational root.

Suppose that $\psi_3(x)=0$ for $x\in\Q$. The values $x=0$ and
$x=-4/3$ do not satisfy this equation, since $t\neq0$. Hence the
rational quantities
\begin{equation}\label{eq:3torsion-RS}
R=\frac{2(14-3x)}{3x+4},\qquad
S=\frac{432t}{x(3x+4)}
\end{equation}
are defined, and $S\neq0$. Direct substitution gives the identity
\[
S^2-(R^3-39R-70)
=\frac{2916\psi_3(x)}{x^2(3x+4)^3}=0,
\]
which is also checked in the SageMath certificate. Thus $(R,S)$ is an
affine rational point of $E'$ with $S\neq0$. This contradicts
Proposition~\ref{prop:3torsion-auxiliary}, whose three affine rational
points all have $S=0$. Therefore $\mathcalE_t$ admits no rational
$3$-isogeny and, in particular, no rational point of order $3$.
\end{proof}

\section{Exclusion of Rational 5-Isogenies}\label{sec:torsion-5}

We pass from the modular fiber product to a smooth plane quartic model
of its normalization, construct the associated Prym, and control its
rank and rational torsion. A fixed-point argument then determines the
rational points of the quartic and excludes rational $5$-isogenies.

A rational \(5\)-isogeny on \(\mathcalE_t\) would define a noncuspidal
rational point of \(X_0(5)\). This modular curve is isomorphic to
\(\mathbb P^1_\Q\), with a Hauptmodul \(\lambda\) for which
\begin{equation}\label{eq:5torsion-j}
j_{0,5}(\lambda)=\frac{(\lambda^2+10\lambda+5)^3}{\lambda},
\end{equation}
and the cusps are \(\lambda=0,\infty\). In terms of
\(h(\tau)=(\eta(\tau)/\eta(5\tau))^6\), this normalization is
\(\lambda=125/h\), and the Fricke involution is
\(\lambda\mapsto125/\lambda\); see
\cite[\S~3.1 and Tables~2 and~7]{maier2009modular}.
Comparing \eqref{eq:5torsion-j} with \eqref{eq:Et-j} gives
\begin{equation}\label{eq:5torsion-eqj}
-\frac{4096}{t^2(16+27t^2)}
=\frac{(\lambda^2+10\lambda+5)^3}{\lambda}.
\end{equation}
Thus any rational \(5\)-isogeny for \(t\neq0\) gives a rational point,
with \(\lambda t\neq0\), on the affine fiber product
\begin{equation}\label{eq:5torsion-affine-fiber}
t^2(16+27t^2)(\lambda^2+10\lambda+5)^3+4096\lambda=0.
\end{equation}
Its normalization has the smooth plane quartic model \(C_5\) below.
The following lemma explains why the Prym rank and torsion bounds
suffice to determine its rational points.

\begin{lemma}\label{lem:prym-fixed-points}
Let \(C/\Q\) be a smooth projective geometrically integral,
non-hyperelliptic curve of genus at least \(3\), with an involution
\(\iota\) defined over \(\Q\) and a rational fixed point \(Q_0\). Put
\[
\mathcal P=\operatorname{im}(1-\iota_*)\subseteq\operatorname{Jac}(C).
\]
If \(2\mathcal P(\Q)=0\), then every rational point of \(C\) is fixed
by \(\iota\).
\end{lemma}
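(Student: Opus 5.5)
Take a rational point $Q\in C(\Q)$ and consider the divisor class $D=[Q-Q_0]\in\operatorname{Jac}(C)(\Q)$; the goal is to show $\iota(Q)=Q$. Since $\iota$ is defined over $\Q$ and $\iota(Q_0)=Q_0$, we have $(1-\iota_*)D=[Q-\iota(Q)]$, a rational point of $\mathcal P$. The hypothesis $2\mathcal P(\Q)=0$ therefore gives
\[
2\bigl[Q-\iota(Q)\bigr]=0\quad\text{in }\operatorname{Jac}(C),
\]
so the degree-zero divisor $2Q-2\iota(Q)$ is principal.

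Now suppose $\iota(Q)\neq Q$. Principality of $2Q-2\iota(Q)$ provides a nonconstant rational function $f$ on $C$ with $\operatorname{div}(f)=2Q-2\iota(Q)$. Its pole divisor is $2\iota(Q)$, so $f$ defines a morphism $C\to\mathbb P^1$ of degree $2$ (degree $1$ is impossible because $C$ has positive genus). A curve of genus at least $2$ admitting a degree-$2$ map to $\mathbb P^1$ is hyperelliptic, which contradicts the hypothesis. Hence $\iota(Q)=Q$ for every $Q\in C(\Q)$.

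I also need $[Q-\iota(Q)]$ to lie in $\mathcal P$, and not just to be some $2$-torsion class. By definition $\mathcal P$ is the image of $1-\iota_*$, and $(1-\iota_*)[Q-Q_0]=[Q-Q_0]-[\iota Q-Q_0]=[Q-\iota Q]$, so this is immediate. The class is rational because $Q$, $Q_0$ and $\iota$ are all defined over $\Q$. Only the non-hyperelliptic step carries real content, and it is standard: a divisor $2Q-2Q'$ with $Q\neq Q'$ that is principal forces a $g^1_2$. The genus bound is what makes a $g^1_2$ force hyperellipticity. Consequently the whole argument is short and has no serious obstacle; the care needed is just in tracking rationality and in noting that $f$ is nonconstant because $Q\neq\iota(Q)$.
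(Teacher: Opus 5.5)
Your proof is correct and follows the paper's argument essentially verbatim: you apply $1-\iota_*$ to $[Q-Q_0]$ to land in $\mathcal P(\Q)$, deduce that $2Q-2\iota(Q)$ is principal, and obtain a degree-$2$ map to $\mathbb P^1$ when $Q\neq\iota(Q)$, contradicting non-hyperellipticity. Your parenthetical ruling out degree $1$ is unnecessary, since the degree of $f$ is exactly the degree $2$ of its pole divisor $2\iota(Q)$, but it is harmless.
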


\begin{proof}
For \(Q\in C(\Q)\), the fixed-point hypothesis gives
\[
[Q-\iota(Q)]=(1-\iota_*)[Q-Q_0]\in\mathcal P(\Q).
\]
Hence \(2Q-2\iota(Q)\) is principal. If \(Q\neq\iota(Q)\), a function
with this divisor gives a degree-\(2\) map to \(\mathbb P^1\),
contradicting non-hyperellipticity.
\end{proof}

\begin{proposition}\label{prop:C5-rational-points}
Let \(C_5\) be the smooth plane quartic
\begin{equation}\label{eq:C5-canonical-quartic}
y^4+(6x^2-8xw+w^2)y^2+x(x+w)(9x^2+3xw-w^2)=0.
\end{equation}
Then
\[
C_5(\Q)=\{(0:0:1),\,(-1:0:1)\}.
\]
\end{proposition}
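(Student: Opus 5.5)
The quartic \eqref{eq:C5-canonical-quartic} involves only even powers of \(y\), so it carries the involution \(\iota\colon(x:y:w)\mapsto(x:-y:w)\), defined over \(\Q\). The plan is to apply Lemma~\ref{lem:prym-fixed-points} to \(C_5\) with this \(\iota\), and then read off the rational points from the fixed locus.

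\textbf{Hypotheses of the lemma.} First we check that \(C_5\) is smooth; then it is a non-hyperelliptic curve of genus \(3\), since smooth plane quartics are canonically embedded. This is a Jacobian-criterion computation, which we would certify in Magma. The point \(Q_0=(0:0:1)\) lies on \(C_5\) and is fixed by \(\iota\).

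\textbf{The quotient and the Prym.} The quotient \(C_5/\iota\) is obtained by setting \(v=y^2\). This gives the conic
\[
v^2+(6x^2-8xw+w^2)v+x(x+w)(9x^2+3xw-w^2)=0
\]
in weighted coordinates, so we should rather view it as a genus-\(1\) curve \(D\) of degree \(4\) in \(x,w\). Completing the square in \(v\) gives
\[
D:\ z^2=(6x^2-8xw+w^2)^2-4x(x+w)(9x^2+3xw-w^2).
\]
The fixed points of \(\iota\) on \(C_5\) are exactly the points with \(y=0\), plus possibly points at \(w\)-infinity. Riemann--Hurwitz with \(g(C_5)=3\) and \(g(D)=1\) gives \(4=2\cdot0+r\), so there are \(r=4\) fixed points. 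The Prym \(\mathcal P=\operatorname{im}(1-\iota_*)\) is then a \(2\)-dimensional abelian surface, isogenous to \(\operatorname{Jac}(C_5)/\operatorname{Jac}(D)\).

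\textbf{Computing \(\mathcal P(\Q)\).} We would realize \(\mathcal P\) concretely, for example as the Jacobian of an explicit genus-\(2\) curve via the bigonal or Recillas-type construction: a double cover of a genus-\(1\) curve ramified at \(4\) points has a Prym isogenous to a genus-\(2\) Jacobian. Alternatively, we could work in Magma with \(\operatorname{Jac}(C_5)\) directly through the kernel of \(1+\iota_*\). The key task is to prove \(\rank\mathcal P(\Q)=0\) unconditionally, via a \(2\)-Selmer computation whose Selmer rank bound matches the visible \(2\)-torsion. We then bound \(\mathcal P(\Q)_{\mathrm{tors}}\) by reducing modulo several primes of good reduction. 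Taking the gcd of \(\#\mathcal P(\mathbb F_p)\) over these primes, and comparing group structures, should show that the torsion subgroup has exponent dividing \(2\). I expect this step to be the main obstacle. Proving rank zero needs the Selmer bound to be sharp; if it is not, we would try isogeny descent through the \((2,2)\)-structure coming from \(\iota\), or compute the \(2\)-Selmer group of the genus-\(2\) Jacobian over a suitable field. Certifying exponent \(2\), rather than merely bounding the order, may also require comparing \(\mathcal P(\mathbb F_p)[4]\) across primes.

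\textbf{Conclusion.} Once \(2\mathcal P(\Q)=0\), Lemma~\ref{lem:prym-fixed-points} forces every rational point to satisfy \(y=0\). Setting \(y=0\) gives \(x(x+w)(9x^2+3xw-w^2)=0\). The discriminant of the quadratic factor is \(9+36=45\), which is not a square, so that factor has no rational roots. The rational fixed points are therefore exactly \((0:0:1)\) and \((-1:0:1)\). We would also check the line \(w=0\): there the equation becomes \(y^4+6x^2y^2+9x^4=(y^2+3x^2)^2=0\), which has no nonzero rational solutions. Hence \(C_5(\Q)\) consists of these two points.
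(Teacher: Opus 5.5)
Your skeleton is the same as the paper's: the involution $\iota$, Lemma~\ref{lem:prym-fixed-points} with $Q_0=(0:0:1)$, a genus-$2$ curve whose Jacobian is isogenous to $\mathcal P$, rank zero plus a good-reduction torsion bound, and the fixed-locus computation. The paper obtains the genus-$2$ curve from the explicit Ritzenthaler--Romagny construction rather than a bigonal argument. You carry out the fixed-locus computation correctly, including the line $w=0$.

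The gap is in the step you yourself single out, and your primary plan for it does not work. For the genus-$2$ curve $P_5$ that arises, the $2$-Selmer group has $\mathbf{F}_2$-dimension $2$, while $J(P_5)(\Q)[2]$ has dimension $1$. So a Selmer computation alone yields only $\rank\mathcal P(\Q)\le 1$. The surplus class is not an artefact of the particular descent. It maps to a genuinely nonzero element of the Tate--Shafarevich group, namely the class of the torsor $\operatorname{Pic}^1_{P_5}$. That class is everywhere locally trivial, so it shows up as a Selmer element in any descent defined by local conditions. Your fallbacks are not worked out, and neither gives a reason to expect a sharper bound. An isogeny descent through the $(2,2)$-structure is unspecified. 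A descent over an extension field would also require you to control a possibly larger rank there.

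The missing idea is the Poonen--Stoll deficient-place argument. The curve $P_5$ has exactly one deficient place, $p=2$: it has no divisor of degree $1$ defined over $\Q_2$. An odd number of deficient places forces $\langle c,c\rangle\neq 0$ under the Cassels--Tate pairing, where $c=[\operatorname{Pic}^1_{P_5}]$. Hence $c\neq 0$, so the image of the $2$-Selmer group in the Tate--Shafarevich group is nontrivial. The rank bound therefore drops by one, without assuming finiteness of that group. This gives $\rank\mathcal P(\Q)=\rank J(P_5)(\Q)=0$, and the paper corroborates it with an explicit Fisher--Yan computation of the pairing.

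With that in hand, your torsion step is simpler than you feared. One has $\#J(P_5)(\mathbb{F}_p)=44,108,134$ for $p=7,11,13$, and these equal $\#\mathcal P(\mathbb{F}_p)$ by isogeny invariance. Their gcd is $2$, and the $\ell=p$ primary parts are controlled by the other two primes. So $\#\mathcal P(\Q)_{\rm tors}\le 2$, and no comparison of $4$-torsion structures is needed.
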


\begin{proof}
Let \(\iota(x:y:w)=(x:-y:w)\), let
\(\pi:C_5\to D=C_5/\langle\iota\rangle\) be the quotient map, and put
\(J=\operatorname{Jac}(C_5)\). The certificates in
Appendix~\ref{app:verification} verify that \(C_5\) is smooth and
non-hyperelliptic of genus \(3\), and that \(D\) has elliptic model
\[
E:\quad v^2=u^3-u^2+12u+72.
\]
The Prym variety of this double cover is
\[
\mathcal P=\ker\!\bigl(\operatorname{Nm}_\pi:J\to
\operatorname{Jac}(D)\bigr)^0
=\operatorname{im}(1-\iota_*).
\]
Indeed, this image is connected, lies in the norm kernel, and has
dimension \(g(C_5)-g(D)=2\). Since \(Q_0=(0:0:1)\) is a rational
fixed point, Lemma~\ref{lem:prym-fixed-points} reduces the rational-point
problem to proving \(2\mathcal P(\Q)=0\).

To identify \(\mathcal P\), write \eqref{eq:C5-canonical-quartic} as
\(y^4-hy^2+fg=0\), where
\[
f=x(x+w),\qquad h=-6x^2+8xw-w^2,\qquad
 g=9x^2+3xw-w^2.
\]
The coefficient matrix in the basis \(x^2,xw,w^2\) is
\[
A=\begin{pmatrix}1&1&0\\-6&8&-1\\9&3&-1\end{pmatrix},
\qquad \det A=-20.
\]
These splitting and nondegeneracy data allow us to apply the explicit
Prym construction of Ritzenthaler and Romagny
\cite[Theorem~1.1]{ritzenthaler2018prym}. It gives a genus-\(2\) curve
whose Jacobian is \(\Q\)-isogenous to \(\mathcal P\), with equation
\[
z^2=-\frac{(6s^2-2s+1)(324s^4+144s^3+14s^2-6s-1)}{2000}.
\]
The substitution \(v=100z\) gives
\begin{equation}\label{eq:P5-prym-curve}
P_5:\quad
v^2=-5(6s^2-2s+1)(324s^4+144s^3+14s^2-6s-1).
\end{equation}
Write \(J(P_5)=\operatorname{Jac}(P_5)\). Rank is invariant under the
\(\Q\)-isogeny \(\mathcal P\sim J(P_5)\).

The genus-\(2\) descent \cite{stoll2001descent} computes
\[
\dim_{\mathbf F_2}\operatorname{Sel}^{(2)}(J(P_5)/\Q)=2,
\qquad \dim_{\mathbf F_2}J(P_5)(\Q)[2]=1,
\]
so the initial rank upper bound is \(1\). For this genus-\(2\) curve,
a place is deficient if no divisor of degree \(1\) is defined over the
corresponding completion of \(\Q\). The unique deficient place of
\(P_5\) is \(p=2\). An odd number of deficient places improves this
upper bound by \(1\), without assuming finiteness of the
Tate--Shafarevich group
\cite[Theorem~5 and Corollary~12]{poonenStollCasselsTate}; see explicitly
\cite[Section~3, Lemma~3.1]{fisherYanCasselsTate}. Hence
\(\rank J(P_5)(\Q)=\rank\mathcal P(\Q)=0\), in agreement with the
certified output \(\operatorname{RankBounds}(J(P_5))=(0,0)\).
Appendix~\ref{app:verification} records the corroborating explicit
Cassels--Tate pairing calculation.

The torsion bound is separate. The discriminant of the sextic defining
\(P_5\) in \eqref{eq:P5-prym-curve} is supported only at \(2,3,5\), so \(J(P_5)\) has good reduction at \(7,11,13\).
Good reduction is invariant under isogeny, and the \(\Q\)-isogeny
\(\mathcal P\sim J(P_5)\) induces isomorphic rational \(\ell\)-adic
Tate modules. Thus both varieties have the same Frobenius polynomials
and reduction orders at these primes
\cite[Chapter~IV, Theorem~3.5, Corollary~3.6 and the following
discussion]{milne2008abelian}. The certificate gives
\[
\#J(P_5)(\mathbb F_7)=44,\qquad
\#J(P_5)(\mathbb F_{11})=108,\qquad
\#J(P_5)(\mathbb F_{13})=134.
\]
Prime-to-residue-characteristic torsion injects under good reduction
\cite[Chapter~IV, proof of Theorem~3.5]{milne2008abelian}.
For \(\ell\notin\{7,11,13\}\), the order of the \(\ell\)-primary
rational torsion of \(\mathcal P\) divides \(\gcd(44,108,134)=2\).
For \(\ell=7,11,13\), the other two good primes give greatest common
divisors \(2,2,4\), respectively, so these three primary parts are
trivial. Therefore \(\#\mathcal P(\Q)_{\rm tors}\leq2\).
Together with rank zero, this proves \(2\mathcal P(\Q)=0\).

Lemma~\ref{lem:prym-fixed-points} now forces every rational point of
\(C_5\) into the fixed locus of \(\iota\). The only projective fixed
point with \(y\neq0\) is \((0:1:0)\), which is not on \(C_5\).
Setting \(y=0\) gives
\[
x(x+w)(9x^2+3xw-w^2)=0.
\]
There are no such points with \(w=0\), and the last quadratic has
discriminant \(45\) on \(w=1\), a nonsquare in \(\Q\). Thus the rational
fixed points are exactly \((0:0:1)\) and \((-1:0:1)\).
\end{proof}

\begin{theorem}\label{thm:no-5-torsion}
For every \(t\in\Q^\times\), the elliptic curve \(\mathcalE_t\) admits
no rational \(5\)-isogeny. In particular, \(\mathcalE_t(\Q)\) has no
point of order \(5\).
\end{theorem}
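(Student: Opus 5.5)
Suppose, for contradiction, that \(\mathcalE_t\) admits a rational \(5\)-isogeny for some \(t\in\Q^\times\). As explained before Lemma~\ref{lem:prym-fixed-points}, this gives a noncuspidal rational point of \(X_0(5)\), that is, some \(\lambda\in\Q^\times\) satisfying \eqref{eq:5torsion-eqj}. Hence \((t,\lambda)\) is a rational point of the affine fiber product \eqref{eq:5torsion-affine-fiber} with \(\lambda t\neq0\). I then need to transport this point to the smooth plane quartic \(C_5\) and use Proposition~\ref{prop:C5-rational-points}, which says that \(C_5(\Q)\) consists only of \((0:0:1)\) and \((-1:0:1)\).

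To carry this out I need an explicit birational map \(\phi\) from the curve \eqref{eq:5torsion-affine-fiber} to \(C_5\), defined over \(\Q\). I would write \(\phi\) as rational functions \(x,y,w\) in \(t,\lambda\), together with an inverse. The natural way to find it is to look at \eqref{eq:5torsion-affine-fiber} as a quadratic equation in \(T=t^2\). Its solvability forces a discriminant, a polynomial in \(\lambda\) built from \((\lambda^2+10\lambda+5)^3\) and \(\lambda\), to be a square. Then \(t^2\) itself must also be a square. Clearing cube factors should reduce these conditions to the biquadratic form \(y^4-hy^2+fg=0\) in which \(C_5\) is written. The identity relating the two equations would be checked symbolically, as in the certificates of Appendix~\ref{app:verification}.

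The key point is that \(\phi\) is a morphism at every point \((t,\lambda)\) with \(\lambda t\neq 0\). A rational point of the fiber product is a smooth point, or at least lies over a unique rational point of the normalization. Either way, it maps to a rational point of \(C_5\), which must be \((0:0:1)\) or \((-1:0:1)\). I would then compute the preimages of these two points under \(\phi\), or equivalently evaluate \(t\) and \(\lambda\) as rational functions on \(C_5\) at those two points. I expect both to lie over \(t=0\), \(\lambda\in\{0,\infty\}\), or \(t=\infty\). In other words, they should be cusps, or points over the singular fiber \(t=0\). That contradicts \(\lambda t\neq0\) and \(\lambda\neq\infty\), so no rational \(5\)-isogeny exists. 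A rational point of order \(5\) would generate the kernel of a rational \(5\)-isogeny, which proves the final sentence of Theorem~\ref{thm:no-5-torsion}.

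The main obstacle is getting the birational map right and handling the exceptional locus. A rational point of the affine fiber product might lie where \(\phi\) has a base point, or at a singular point of the fiber product. I would first try to list the singular points of \eqref{eq:5torsion-affine-fiber} with \(\lambda t\neq0\). If any exist, they should be non-rational, or can be handled directly, for example because \(\lambda^2+10\lambda+5\) has irrational roots. I would also check that \(\phi\) is regular on the complement of \(\{\lambda t=0\}\) and these singular points, so that every relevant rational point is accounted for.
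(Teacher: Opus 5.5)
Your plan is essentially the paper's proof: pass to the normalization $C_5$ of the fiber product, check that the singular points with $\lambda t\neq0$ are non-rational (the paper finds they satisfy $\lambda^2+4\lambda-1=0$ and $t^2=-8/27$), and then apply Proposition~\ref{prop:C5-rational-points}. The only variation is the final step: the paper uses the factorization of $\Phi_2$ to show that admissible points map off the fixed locus $y=0$, whereas you check that the two rational points of $C_5$ lie over degenerate points (they lie over $(\lambda,t)=(0,0)$ and over $\lambda=\infty$), which is equally valid.
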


\begin{proof}
Suppose that \(\mathcalE_t\) admits a rational \(5\)-isogeny for some
\(t\in\Q^\times\). Its Galois-stable cyclic kernel defines a
noncuspidal rational point of \(X_0(5)\), so the opening modular
calculation gives a rational point \((\lambda,t)\) on
\eqref{eq:5torsion-affine-fiber} with \(\lambda t\neq0\).
Let \(\mathcal C_5\) be the projective closure of this fiber product,
in coordinates \((L:T:Z)\), where \(\lambda=L/Z\) and \(t=T/Z\):
\begin{equation}\label{eq:5torsion-projective-fiber}
T^2(16Z^2+27T^2)(L^2+10LZ+5Z^2)^3+4096LZ^9=0.
\end{equation}
The certificate verifies that the normalization of \(\mathcal C_5\)
has genus \(3\) and that the reduced geometric singular scheme of
\(\mathcal C_5\) has degree \(6\). Two singularities are \((0:1:0)\) and \((1:0:0)\); the other
four form the affine degree-\(4\) scheme
\[
\lambda^2+4\lambda-1=0,\qquad t^2+\frac{8}{27}=0.
\]
Thus every rational affine point is smooth. The smooth canonical model
is \(C_5\) in Proposition~\ref{prop:C5-rational-points}; the certificate
passes from Magma's canonical coordinates \mbox{\((X:Y:W)\)} to
\eqref{eq:C5-canonical-quartic} by
\[
X=\frac{64}{3}x,\qquad Y=16y,\qquad W=w.
\]
In these smaller coordinates, write the resulting canonical map as
\[
\phi=(\Phi_1:\Phi_2:\Phi_3):\mathcal C_5\dashrightarrow C_5.
\]
On \(Z=1\), its second coordinate is
\[
\Phi_2(\lambda,t,1)
=\frac{729}{1024}\lambda\left(t^2+\frac{8}{27}\right)
(\lambda^2+10\lambda+5).
\]
Every factor is nonzero for rational \(\lambda,t\) with
\(\lambda t\neq0\): \(t^2+8/27\) has no rational root, and
\(\lambda^2+10\lambda+5\) has nonsquare discriminant \(80\).
Hence the canonical coordinate tuple is defined there and gives a
rational point of \(C_5\) with \(y\neq0\), contradicting
Proposition~\ref{prop:C5-rational-points}. This excludes every rational
\(5\)-isogeny. A rational point of order \(5\) would generate a
Galois-stable cyclic kernel, so it too is excluded.
\end{proof}

\begin{remark}\label{rem:5-method}
The degree-\(6\) \(j\)-map of \(X_0(5)\) detects rational cyclic
subgroups and gives the stronger no-isogeny conclusion above. At level
\(7\), a rational torsion point supplies a Tate normal form whose
\(c_4\)-invariant must be a rational fourth power. This condition leads
to the hyperelliptic genus-\(3\) curve treated by two-cover descent and
elliptic Chabauty in the next section. These different modular inputs
account for the different Diophantine methods.
\end{remark}

\section{Exclusion of Rational 7-Torsion}\label{sec:torsion-7}

A rational point of order $7$ would force the $c_4$-invariant of its
Tate normal form to be a rational fourth power. We first determine the
rational points on the genus-$3$ curve $C_7$ defined by the weaker
square condition.
Two-cover descent gives six possible descent classes, and degree-$4$
norm maps associate them with six genus-$1$ quartics over a biquadratic
field. Elliptic Chabauty determines their rational coordinate images.
We then eliminate the noncuspidal parameters among these points using
the stronger fourth-power condition.

An elliptic curve over $\Q$ with a rational point $P$ of order $7$ is
$\Q$-isomorphic to a Tate normal form
\[
E_\mu:\quad y^2+(1-c)xy-by=x^3-bx^2,\qquad P=(0,0),
\]
where
\[
b=\mu^2(\mu-1),\qquad c=\mu(\mu-1),\qquad \mu\in\Q;
\]
see the entry $N=7$ of Kubert's table \cite[Table~1]{kubert1976universal}.
The discriminant is $\mu^7(\mu-1)^7D_7(\mu)$, where
$D_7(\mu)=\mu^3-8\mu^2+5\mu+1$. Thus $\mu=0,1$, the roots of $D_7$,
and $\mu=\infty$ are cusps and do not give nonsingular elliptic curves.
Set
\[
A_7(\mu)=\mu^2-\mu+1,\qquad
B_7(\mu)=\mu^6-11\mu^5+30\mu^4-15\mu^3-10\mu^2+5\mu+1.
\]

\begin{lemma}\label{lem:7torsion-fourth-power}
If $\mathcalE_t$ has a rational point of order $7$ for some
$t\in\Q^\times$, then there is a noncuspidal $\mu\in\Q$ such that
\begin{equation}\label{eq:7torsion-fourth-power}
A_7(\mu)B_7(\mu)\in\Q^{\times4}.
\end{equation}
In particular, this parameter gives a rational point on
\begin{equation}\label{eq:7torsion-C7}
C_7:\quad y^2=A_7(\mu)B_7(\mu).
\end{equation}
\end{lemma}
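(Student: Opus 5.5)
Suppose \(\mathcalE_t(\Q)\) contains a point \(P\) of order \(7\). The plan is to compare the invariant \(c_4\) computed from two different models of the same curve. First, by Kubert's table, the pair \((\mathcalE_t,P)\) is \(\Q\)-isomorphic to a Tate normal form \(E_\mu\) for some \(\mu\in\Q\). This \(\mu\) is noncuspidal, because \(E_\mu\) must be nonsingular, so \(\mu\ne0,1\) and \(D_7(\mu)\ne0\). Here \(D_7\) is irreducible over \(\Q\): it has no rational root, since \(\pm1\) are not roots.

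Next I will compute \(c_4(E_\mu)\) directly from its Weierstrass coefficients
\[
a_1=1-c,\qquad a_2=-b,\qquad a_3=-b,\qquad a_4=a_6=0,
\]
with \(b=\mu^2(\mu-1)\) and \(c=\mu(\mu-1)\). The standard formulas give \(b_2=a_1^2+4a_2\), \(b_4=a_1a_3\), and \(c_4=b_2^2-24b_4\). I expect, and will verify by direct expansion, the classical factorization
\[
c_4(E_\mu)=A_7(\mu)\,B_7(\mu),
\]
where \(A_7\) and \(B_7\) are as defined before the lemma. A degree check supports this: \(a_1\) has degree \(2\) in \(\mu\), so \(b_2\) has degree \(4\) and \(c_4\) has degree \(8=2+6\). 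The constant terms also match: at \(\mu=0\) we get \(b_2=1\), \(b_4=0\), \(c_4=1\), agreeing with \(A_7(0)B_7(0)=1\).

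Then I will invoke the transformation law for Weierstrass models. Any \(\Q\)-isomorphism between Weierstrass models has the form \(x=u^2x'+r\), and it satisfies \(c_4'=u^{-4}c_4\) with \(u\in\Q^\times\). From \eqref{eq:Et-ci} we have \(c_4(\mathcalE_t)=256=4^4\). Hence
\[
A_7(\mu)B_7(\mu)=u^{4}\cdot 256=(4u)^4\in\Q^{\times4}.
\]
This quantity is nonzero because \(u\ne0\). Equivalently, \(c_4(E_\mu)\ne0\), which also follows from \(j(\mathcalE_t)\ne0\) (Section~\ref{sec:elliptic-family}).

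Finally, set \(y=(4u)^2\). Then \(y^2=A_7(\mu)B_7(\mu)\), which gives the rational point \((\mu,y)\) on \(C_7\).

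The main obstacle is the polynomial identity \(c_4(E_\mu)=A_7B_7\). It is routine, but it requires a careful expansion, and I would check it symbolically, as the paper does elsewhere. Everything else follows from the invariance of \(c_4\) up to fourth powers and from the fact that \(c_4(\mathcalE_t)=256\) is a fourth power.
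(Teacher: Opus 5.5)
Your proposal is correct and follows the paper's proof essentially verbatim: you compute $c_4(E_\mu)=b_2^2-24b_4=A_7(\mu)B_7(\mu)$ from the Tate normal form, then use that $\Q$-isomorphic Weierstrass models have $c_4$-invariants differing by a rational fourth power, together with $c_4(\mathcalE_t)=256=4^4$, and set $y=(4u)^2$. The identity you flag as the main obstacle does hold: both sides expand to $\mu^8-12\mu^7+42\mu^6-56\mu^5+35\mu^4-14\mu^2+4\mu+1$, so nothing is missing.
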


\begin{proof}
For the displayed Tate normal form,
\[
b_2=(1-c)^2-4b,\qquad b_4=-b(1-c),
\]
so direct substitution in $c_4=b_2^2-24b_4$ gives
\[
c_4(E_\mu)=A_7(\mu)B_7(\mu).
\]
On the other hand, $c_4(\mathcalE_t)=256=4^4$ by
\eqref{eq:Et-ci}. Since $E_\mu$ and $\mathcalE_t$ are
$\Q$-isomorphic, their nonzero $c_4$-invariants differ by the fourth
power of a rational Weierstrass scaling factor. Hence
$A_7(\mu)B_7(\mu)=v^4$ for some $v\in\Q^\times$, and
$(\mu,v^2)\in C_7(\Q)$. The invariant identities are also checked in
the Magma scripts in the online computational supplement.
\end{proof}

Magma verifies \(\gcd(A_7B_7,(A_7B_7)')=1\). Thus \(A_7(\mu)B_7(\mu)\) is a squarefree polynomial of degree \(8\), and the standard genus formula for an even-degree hyperelliptic curve \(y^2=f(\mu)\) gives genus \((8-2)/2=3\) for the smooth projective model of \(C_7\). Moreover \(A_7\) has discriminant \(-3\), and \(B_7\) has no rational root by the rational-root theorem since \(B_7(1)=1\) and \(B_7(-1)=43\). Hence \(A_7B_7\) has no rational root, so \(C_7\) has no rational finite Weierstrass point.

\begin{lemma}\label{lem:C7-cover-bridge}
Let
\[
f(\mu)=A_7(\mu)B_7(\mu),\qquad \mathscr A=\Q[\theta]/(f(\theta)).
\]
Put
\[
M=\Q(s,r),\qquad s^2=21,\quad r^2=-3,
\]
so that $[M:\Q]=4$, and set
\[
\alpha=\frac{1+r}{2},\qquad
B_+(X)=X^3-\frac{11+s}{2}X^2+\frac{5+s}{2}X+1,
\]
\[
g(X)=(X-\alpha)B_+(X).
\]
Then $g$ is a degree-$4$ factor of $f$ over $M$. Let
\[
L=M[\Theta]/(g(\Theta))
\]
be the degree-$4$ finite \'{e}tale $M$-algebra, and let
\[
j:\mathscr A\otimes_\Q M\longrightarrow L,\quad \theta\longmapsto\Theta,
\]
and, for a chosen lift $\delta\in\mathscr A^\times$ of a fake-$2$-Selmer class, put
\[
\gamma_\delta=N_{L/M}(j(\delta)).
\]
If an affine non-Weierstrass point $P=(\mu,y)\in C_7(\Q)$ has fake-Selmer class represented by $\delta$, then there exists an $M$-rational point on
\begin{equation}\label{eq:C7-elliptic-cover}
H_\delta:\quad Y^2=\gamma_\delta g(X)
\end{equation}
whose $X$-coordinate is $\mu$. In particular, the rational-scalar ambiguity in the fake-Selmer class introduces no additional twists in \eqref{eq:C7-elliptic-cover}.
\end{lemma}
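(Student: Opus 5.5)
The argument has three parts: first confirm that \(g\) divides \(f\) over \(M\), then push the fake-Selmer relation for \(P\) through the map \(j\) and the norm \(N_{L/M}\), and finally check that the rational-scalar ambiguity is absorbed.

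\emph{Step 1: the factorization.} The Galois closure of \(A_7\) is \(\Q(\sqrt{-3})\), and \(\alpha=(1+r)/2\) is a root of \(A_7\). The cubic \(B_+\) should be one of the two conjugate cubic factors of \(B_7\) over \(\Q(\sqrt{21})\). I would verify this by expanding \(B_+(X)B_-(X)\), where \(B_-\) is the conjugate with \(s\mapsto -s\), and comparing coefficients with \(B_7\). This is a finite symbolic computation, so \(g\mid f\) in \(M[X]\). Since \(f\) is squarefree, \(\mathscr A\otimes_\Q M\cong M[X]/(f)\) splits as \(M[X]/(g)\times M[X]/(f/g)\). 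The map \(j\) is the projection onto the first factor, so it is a well-defined ring homomorphism with \(j(\theta)=\Theta\).

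\emph{Step 2: the norm identity.} Let \(P=(\mu,y)\) be affine with \(f(\mu)\neq0\), which holds because \(C_7\) has no rational finite Weierstrass point. The fake-Selmer (\(x-T\)) map sends \(P\) to \(\mu-\theta\in\mathscr A^\times/\Q^\times\mathscr A^{\times2}\). Saying that \(P\) has class \(\delta\) means
\[
\mu-\theta=c\,\delta\,\beta^2,\qquad c\in\Q^\times,\ \beta\in\mathscr A^\times.
\]
Applying \(j\) and then \(N_{L/M}\), and using the standard identity \(N_{L/M}(\mu-\Theta)=g(\mu)\) for monic \(g\) of degree \(4\), gives
\[
g(\mu)=c^4\,\gamma_\delta\,N_{L/M}(j(\beta))^2 .
\]
Setting \(Y=c^2N_{L/M}(j(\beta))\in M\) then gives \(Y^2=\gamma_\delta g(\mu)\), so \((\mu,Y)\in H_\delta(M)\) has \(X\)-coordinate \(\mu\).

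\emph{Step 3: the ambiguity.} Because \([L:M]=4\) is even, the scalar \(c\) contributes the square \(c^4\). Likewise, changing the lift \(\delta\) by an element \(c'\beta'^2\) multiplies \(\gamma_\delta\) by \(c'^4N(\beta')^2\), which is a square in \(M\). So \(H_\delta\) is independent of all choices up to \(M\)-isomorphism, and no extra twists are needed. This parity point is the real content of the final sentence of the lemma.

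The only step that is not formal is Step 1, the explicit factorization check. Everything after it is the usual norm-map argument, with \(\deg g=4\) doing the work.
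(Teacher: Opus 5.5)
Your proposal follows the paper's proof essentially step for step. You factor $f=(X-\alpha)(X-\bar\alpha)B_+B_-$ over $M$; your Step~1 expansion does go through, since the coefficients of $B_+B_-$ come out as $1,-11,30,-15,-10,5,1$. You then write $\mu-\theta=c\,\delta\,\beta^2$ in the Bruin--Stoll sense and apply $j$ and $N_{L/M}$ to get $g(\mu)=c^4\gamma_\delta N_{L/M}(j(\beta))^2$. Finally you use $\deg g=4$ to turn both the scalar $c$ and any change of lift into squares in $M$.

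There is one algebra slip in Step~2. With $Y=c^2N_{L/M}(j(\beta))$ you get $\gamma_\delta Y^2=g(\mu)$, not $Y^2=\gamma_\delta g(\mu)$. The fix, which is what the paper does, is to take $Y=c^2\gamma_\delta N_{L/M}(j(\beta))$. Then
\[
Y^2=c^4\gamma_\delta^2N_{L/M}(j(\beta))^2=\gamma_\delta\, g(\mu),
\]
and $(\mu,Y)\in H_\delta(M)$ as required.
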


\begin{proof}
Over $M$ one has
\[
f(X)=(X-\alpha)(X-\bar\alpha)B_+(X)B_-(X),
\]
where $\bar\alpha=(1-r)/2$ and
\[
B_-(X)=X^3-\frac{11-s}{2}X^2+\frac{5-s}{2}X+1;
\]
hence $g$ has degree $4$ and divides $f$. For the monic even-degree model $y^2=f(\mu)$, Bruin--Stoll descent associates to an affine non-Weierstrass point the class of $\mu-\theta$ in $\mathscr A^\times/\Q^\times\mathscr A^{\times2}$ \cite[Section~2]{bruin2009two}. Thus there are $c\in\Q^\times$ and $\xi\in(\mathscr A\otimes_\Q M)^\times$ such that
\[
\mu-\theta=c\,\delta\,\xi^2.
\]
Applying $j$ and taking the degree-$4$ norm gives
\[
g(\mu)=c^4\gamma_\delta N_{L/M}(j(\xi))^2.
\]
Consequently
\[
\left(c^2\gamma_\delta N_{L/M}(j(\xi))\right)^2
=\gamma_\delta g(\mu),
\]
which is the required point on $H_\delta$. The exponent $4$ is essential: it makes the norm of every rational scalar a square. More generally, replacing $\delta$ by another lift $q\delta\eta^2$, with $q\in\Q^\times$, multiplies $\gamma_\delta$ by the square $q^4N_{L/M}(j(\eta))^2$. Thus the square class of the quartic twist is independent of the chosen lift, and no rational scalar twists are omitted.
\end{proof}

\begin{proposition}\label{prop:C7-rational-points}
The rational points on $C_7$ are exactly the two points at infinity together with
\[
(0,\pm1),\quad (1,\pm1),\quad
\left(-\frac12,\pm\frac7{16}\right),\quad
\left(\frac23,\pm\frac7{81}\right),\quad
(3,\pm7).
\]
\end{proposition}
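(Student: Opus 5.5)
The proof follows the strategy announced at the start of Section~\ref{sec:torsion-7}: two-cover descent on \(C_7\), followed by elliptic Chabauty on the quartics \(H_\delta\) of Lemma~\ref{lem:C7-cover-bridge}.

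First I check that the listed points lie on \(C_7\). Since \(f=A_7B_7\) is monic of degree \(8\), there are two rational points at infinity. For the affine points, \(f(0)=f(1)=1\) and \(f(3)=49\); the values at \(\mu=-\tfrac12\) and \(\mu=\tfrac23\) are checked directly.

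Next I run the Bruin--Stoll two-cover descent \cite{bruin2009two} for the monic even-degree model \(y^2=f(\mu)\). This computes the fake \(2\)-Selmer set in \(\mathscr A^\times/\Q^\times\mathscr A^{\times2}\). I expect it to have exactly six classes, matching the six descent classes mentioned in the section overview. For each class I fix a lift \(\delta\) and compute \(\gamma_\delta=N_{L/M}(j(\delta))\in M^\times\). The points at infinity are handled separately, and \(C_7\) has no rational finite Weierstrass points, as noted before Lemma~\ref{lem:C7-cover-bridge}. So by Lemma~\ref{lem:C7-cover-bridge}, every remaining rational point \((\mu,y)\) yields a point of \(H_\delta(M)\) with \(X=\mu\in\Q\), for the \(\delta\) representing its class.

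Each \(H_\delta\) is a genus-one quartic over the biquadratic field \(M=\Q(\sqrt{21},\sqrt{-3})\). For each class, I use a known rational point of \(C_7\) in that class to give \(H_\delta\) an \(M\)-point. That point transforms \(H_\delta\) into a Weierstrass model \(E_\delta/M\), carrying the \(X\)-coordinate to a degree-\(4\) map \(E_\delta\to\mathbb P^1\). I then compute generators of \(E_\delta(M)\), and I need \(\operatorname{rank}E_\delta(M)<[M:\Q]=4\). Under that condition, elliptic Chabauty (Bruin's method, as implemented in Magma) determines all points of \(E_\delta(M)\) whose \(X\)-image lies in \(\mathbb P^1(\Q)\). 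The set of such \(\mu\) should be exactly \(\{0,1,-\tfrac12,\tfrac23,3,\infty\}\), distributed among the six classes. Taking the union over classes and lifting back to \(C_7\) with both signs of \(y\) gives the stated list.

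The main obstacle is the Chabauty step. It needs provably correct generators of \(E_\delta(M)\), or at least a finite-index subgroup together with saturation at the relevant primes. It also needs the rank condition to hold for all six curves. I would first try to obtain rank upper bounds from \(2\)-Selmer computations over \(M\) and to match them with explicitly found points; where a class's curve has larger Selmer rank, I would use a different degree-\(4\) factor \(g\) (for example \((X-\bar\alpha)B_+\)). I would then run the Mordell--Weil sieve at several primes of \(M\) to confirm that no further points exist.
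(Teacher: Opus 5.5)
Your proposal matches the paper's proof step for step. Both run the six-class fake $2$-Selmer set through Lemma~\ref{lem:C7-cover-bridge} to the quartics $H_\delta$ over $M$, take base points from the known rational points (infinity for $\delta=1$, $(x_i,g(x_i))$ for $\delta=x_i-\theta$), and use elliptic Chabauty with $\operatorname{rank}E_\delta(M)=2<4$; the paper certifies this via saturation and Magma's \texttt{Chabauty} at $p=5$ with $N_i=\#V_i=6$, $R_i=1$. One slip: the $X$-coordinate map on $Y^2=\gamma_\delta g(X)$ has degree $2$, not $4$, though this does not affect the method.
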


\begin{proof}
The two points at infinity are rational because the defining polynomial is
monic of even degree. Let \(P\) be an affine rational point. Since \(f\) has
no rational root, \(P\) is non-Weierstrass; hence
Lemma~\ref{lem:C7-cover-bridge} applies.

We use the Bruin--Stoll two-cover descent algorithm \cite[Sections~2--3]{bruin2009two} as a finite covering step and Bruin's elliptic Chabauty method \cite[Sections~3--4]{bruin2003chabauty} to determine the rational images on the resulting genus-$1$ quartics. The Magma routines used for the fake-Selmer and elliptic-Chabauty stages are documented in the Magma Handbook \cite{magmaHandbookTwoSelmer,magmaHandbookChabauty}. Magma computes the fake $2$-Selmer set of $C_7$ in
\[
\mathscr A^\times/\Q^\times\mathscr A^{\times2}
\]
and verifies that its six classes are represented by the following chosen lifts:
\[
\delta\in
\left\{
1,\,-\theta,\,1-\theta,\,-\frac12-\theta,\,\frac23-\theta,\,3-\theta
\right\}
\subset\mathscr A^\times,
\qquad
\mathscr A=\Q[\theta]/(f(\theta)),
\]
where
\[
f(\mu)=(\mu^2-\mu+1)(\mu^6-11\mu^5+30\mu^4-15\mu^3-10\mu^2+5\mu+1).
\]
Thus every affine point of $C_7(\Q)$ lifts to one of the corresponding two-covers.
Here ``represented by $\delta_i$'' is meant in the Bruin--Stoll formalism: the displayed elements are chosen lifts of the complete locally soluble covering set. Lemma~\ref{lem:C7-cover-bridge}, rather than a normalization convention for these lifts, is what removes the rational-scalar ambiguity.

It remains to determine the rational images from the six fixed quartics $H_i/M$ of \eqref{eq:C7-elliptic-cover}. With $s^2=21$ and $r^2=-3$, their norm factors are
\[
\begin{array}{c|c|c}
i&\delta_i&\gamma_i\\ \hline
1&1&1\\
2&-\theta&-(1+r)/2\\
3&1-\theta&(r-1)/2\\
4&-\tfrac12-\theta&(2+r)(14+3s)/16\\
5&\tfrac23-\theta&(1-3r)(14+3s)/162\\
6&3-\theta&-(5-r)(14+3s)/2.
\end{array}
\]
For $i=1$ the monic quartic $H_1$ has an $M$-rational point at infinity. For $i\geq2$, if $x_i=0,1,-\tfrac12,\tfrac23,3$, respectively, then $\gamma_i=g(x_i)$, so $(x_i,\gamma_i)\in H_i(M)$. These base points identify the $H_i$ with elliptic curves $E_i/M$.
Let $\pi_i:E_i\to\mathbb P^1_M$ denote the transported $X$-coordinate
map, which has degree $2$.

For every $i$, Magma's \texttt{PseudoMordellWeilGroup} returns
\texttt{success = true} and invariants $[0,0]$. Thus the supplied subgroup
has finite odd index in $E_i(M)$ \cite[Example~H133E24]{magmaHandbookChabauty} and
\[
\operatorname{rank}E_i(M)=2<4=[M:\Q],
\]
as required for elliptic Chabauty. At each of the two primes of \(M\)
above \(5\), the certificate verifies good reduction of the chosen
elliptic model and of the degree-\(2\) map \(\pi_i\). It checks smooth
integral quartic models, the exact transport of their \(X\)-maps to
\(E_i\), and preservation of degree under reduction, including the
fibers above infinity; the local data and justification are recorded in
the online computational supplement. After saturating the two free generator images through
$100$, we apply the single-prime routine \texttt{Chabauty} at $p=5$.
Each cover returns
\[
N_i=6,\qquad \#V_i=6,\qquad R_i=1.
\]
The documented completeness criterion states that $N_i$ bounds the rational-image points on the full group whenever the index of the supplied subgroup is finite and coprime to $R_i$ \cite{magmaHandbookChabauty}. The two saturated generators remain independent, so the subgroup passed to \texttt{Chabauty} has rank $2$ and therefore finite index in $E_i(M)$. Since $R_i=1$, coprimality is automatic; equality $N_i=\#V_i$ therefore proves that the rational-image points are exhaustive. For each $i$, the six points counted by $\#V_i$ lie on $E_i(M)$ and have three distinct images under $\pi_i$. The six fresh-process computations give
\[
\begin{array}{c|c|c|c|c}
i&N_i&\#V_i&R_i&\pi_i(E_i(M))\cap\mathbb P^1(\Q)\\ \hline
1,2,3&6&6&1&\{\infty,0,1\}\\
4,5,6&6&6&1&\{-\tfrac12,\tfrac23,3\}.
\end{array}
\]
Hence the union of the certified rational images is
\[
\mu\in\left\{\infty,\,0,\,1,\,-\frac12,\,\frac23,\,3\right\}.
\]
Pulling these six values back to $C_7$ gives exactly the two points at infinity and the ten affine points listed in the proposition. The exact field model, norm factors, seed, rank condition, Chabauty outputs, rational-image assertions, and pullback assertions are recorded in Appendix~\ref{app:verification}.
\end{proof}

\begin{remark}\label{rem:C7-order-three}
The two triples of coordinate values in the table are orbits of an
order-$3$ symmetry. Writing $f=A_7B_7$, the exact identity
\[
(1-\mu)^8f\!\left(\frac1{1-\mu}\right)=f(\mu)
\]
shows that
\[
(\mu,y)\longmapsto
\left(\frac1{1-\mu},\frac{y}{(1-\mu)^4}\right)
\]
extends to an order-$3$ automorphism of the smooth projective curve
$C_7$. Its orbits on the displayed coordinate values are
\[
\infty\longmapsto0\longmapsto1\longmapsto\infty,
\qquad
-\frac12\longmapsto\frac23\longmapsto3\longmapsto-\frac12.
\]
This symmetry explains the grouping of the images; the completeness
proof above retains all six elliptic Chabauty computations.
\end{remark}

\begin{theorem}\label{thm:no-7-torsion}
For every $t\in\Q^\times$, the elliptic curve $\mathcalE_t$ has no rational point of order~$7$.
\end{theorem}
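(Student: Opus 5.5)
The plan is to combine Lemma~\ref{lem:7torsion-fourth-power} with Proposition~\ref{prop:C7-rational-points}. Suppose $\mathcalE_t$ has a rational point of order $7$ for some $t\in\Q^\times$. Lemma~\ref{lem:7torsion-fourth-power} then gives a noncuspidal $\mu\in\Q$ with $A_7(\mu)B_7(\mu)=v^4$ for some $v\in\Q^\times$. In particular $(\mu,v^2)$ is an affine rational point of $C_7$. By Proposition~\ref{prop:C7-rational-points},
\[
\mu\in\left\{0,\,1,\,-\tfrac12,\,\tfrac23,\,3\right\}.
\]
The values $\mu=0$ and $\mu=1$ are cusps, since the discriminant $\mu^7(\mu-1)^7D_7(\mu)$ vanishes there. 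This leaves only the three noncuspidal candidates $-\tfrac12,\tfrac23,3$.

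For these three candidates the proposition gives $y=\pm 7/16,\ \pm 7/81,\ \pm 7$, respectively. The fourth-power condition requires $y=v^2$ to be a rational square. In each case $y$ has $7$-adic valuation $1$, because the denominators $16$ and $81$ are prime to $7$. An element of $\Q^{\times 2}$ has even $7$-adic valuation, so none of these values is a square. Hence $A_7(\mu)B_7(\mu)$, which equals $y^2$ and so has $7$-adic valuation $2$, cannot be a fourth power. This contradicts \eqref{eq:7torsion-fourth-power}, and therefore $\mathcalE_t(\Q)$ has no point of order $7$.

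The only delicate point is making sure that the $\mu$ produced by Lemma~\ref{lem:7torsion-fourth-power} is genuinely one of the listed coordinates. It is affine and rational, and the point at infinity corresponds to the cusp $\mu=\infty$ anyway. I will also double-check the arithmetic of the three values directly by evaluating $A_7B_7$ at each of them; for instance $f(3)=7\cdot 7=49$. All the real difficulty has already been absorbed into Proposition~\ref{prop:C7-rational-points}, so no further obstacle remains.
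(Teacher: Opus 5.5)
Your proposal is correct and follows the paper's proof essentially verbatim: you combine Lemma~\ref{lem:7torsion-fourth-power} with Proposition~\ref{prop:C7-rational-points}, discard the cusps $\infty,0,1$, and kill the remaining parameters $-\tfrac12,\tfrac23,3$ with a $7$-adic valuation argument. The only difference is cosmetic. You observe that $y=\pm v^2$ would have odd $7$-adic valuation, while the paper notes directly that $A_7(\mu)B_7(\mu)\in\{49/256,\,49/6561,\,49\}$ has valuation $2$, which is not divisible by $4$; the two observations are equivalent.
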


\begin{proof}
By Lemma~\ref{lem:7torsion-fourth-power} and
Proposition~\ref{prop:C7-rational-points}, a rational point of order $7$
would give a parameter $\mu\in\{-\tfrac12,\tfrac23,3\}$, since
$\infty,0,1$ are cusps. At these three values, respectively,
\[
A_7(\mu)B_7(\mu)=\frac{49}{256},\quad\frac{49}{6561},\quad49.
\]
Each has $7$-adic valuation $2$, whereas the $7$-adic valuation of a
nonzero rational fourth power is divisible by $4$. This contradicts
\eqref{eq:7torsion-fourth-power}.
\end{proof}

\begin{corollary}\label{cor:remaining-torsion}
Let \(t\in\Q^\times\). Then \(\mathcalE_t(\Q)_{\mathrm{tors}}\) is either trivial or cyclic of order \(2\), \(4\), or \(8\). It is nontrivial only if \(t\) lies in the locus \eqref{eq:Et-2torsion-param}, in which case its unique element of order \(2\) is the point \(T_u\) of Proposition~\ref{prop:Et-2torsion}.
\end{corollary}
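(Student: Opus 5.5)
The corollary is a bookkeeping step: we combine Mazur's classification with the exclusions already proved. Let \(G=\mathcalE_t(\Q)_{\mathrm{tors}}\). By Mazur's theorem, \(G\) is either \(\Z/N\Z\) with \(1\le N\le 10\) or \(N=12\), or \(\Z/2\Z\times\Z/2N\Z\) with \(1\le N\le4\).

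First we remove the non-cyclic groups. Every group \(\Z/2\Z\times\Z/2N\Z\) contains the full subgroup \((\Z/2\Z)^2\). Proposition~\ref{prop:Et-2torsion}, however, shows that \(\mathcalE_t(\Q)[2]\) has order at most \(2\). So \(G\cong\Z/N\Z\) with \(N\in\{1,\dots,10,12\}\).

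Next we remove every \(N\) with an odd prime factor. If \(3\mid N\) or \(5\mid N\), then \(G\) contains a rational point of order \(3\) or \(5\). This is excluded by Theorem~\ref{thm:no-3-torsion} and Theorem~\ref{thm:no-5-torsion}, which rule out rational \(3\)- and \(5\)-isogenies and hence such points. This disposes of \(N=3,5,6,9,10,12\). The case \(N=7\) is excluded by Theorem~\ref{thm:no-7-torsion}. The remaining values are \(N\in\{1,2,4,8\}\), so \(G\) is trivial or cyclic of order \(2\), \(4\), or \(8\), as claimed.

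Finally, suppose \(G\) is nontrivial. Then it is a cyclic \(2\)-group of positive order, so it contains exactly one element of order \(2\). That element is a rational point of order \(2\). By Proposition~\ref{prop:Et-2torsion}, such a point exists only when \(t=u(u^2+4)/4\) for some \(u\in\Q^\times\), and in that case it is the point \(T_u=(-(u^2+4),0)\).

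No step here is a real obstacle: the corollary only collects earlier results. The one point to check is that each isogeny exclusion also excludes the corresponding torsion point. This holds because a rational point of order \(\ell\) generates a Galois-stable cyclic subgroup, which is the kernel of a rational \(\ell\)-isogeny; Theorems~\ref{thm:no-3-torsion} and~\ref{thm:no-5-torsion} already state this consequence.
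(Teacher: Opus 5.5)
Your proposal is correct and follows essentially the same route as the paper. Both arguments combine Mazur's classification with Proposition~\ref{prop:Et-2torsion}, which removes the non-cyclic groups and identifies the unique point of order \(2\), and with Theorems~\ref{thm:no-3-torsion}, \ref{thm:no-5-torsion} and~\ref{thm:no-7-torsion}, which remove the cyclic orders having an odd prime factor.
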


\begin{proof}
By Theorems~\ref{thm:no-3-torsion},~\ref{thm:no-5-torsion}, and~\ref{thm:no-7-torsion}, the group \(\mathcalE_t(\Q)_{\mathrm{tors}}\) contains no element of order \(3\), \(5\), or \(7\), hence none of order \(6\), \(9\), \(10\), or \(12\). Among the groups in Mazur's classification \cite{mazur1977modular,mazur1978rational}, this leaves the trivial group and \(\Z/2\Z\), \(\Z/4\Z\), and \(\Z/8\Z\); the groups \(\Z/2\Z\times\Z/2N\Z\) are excluded by Proposition~\ref{prop:Et-2torsion}, which also shows that a point of order \(2\) exists only for \(t\) in the locus \eqref{eq:Et-2torsion-param} and is then the unique point \(T_u\).\hspace*{1em}
\end{proof}

\section{Exclusion of Rational 4-Torsion and the Torsion Classification}\label{sec:torsion-2primary}

By Corollary~\ref{cor:remaining-torsion}, the only possible nontrivial torsion in the family is cyclic of order $2$, $4$, or $8$, supported on the locus of Proposition~\ref{prop:Et-2torsion}. The halving criterion leads to a genus-one quartic. A $2$-isogeny reduces its rank calculation to a full $2$-descent, where a local obstruction at $3$ excludes the only remaining coset of square classes. This excludes orders $4$ and $8$ and completes the torsion classification. The detailed arithmetic proof is given in Appendix~\ref{app:C4-descent}; the SageMath script recorded in Appendix~\ref{app:verification} serves as an exact corroborating calculation.

\begin{lemma}[Halving criterion]\label{lem:halving}
Let $E:\;y^2=x\,(x^2+Ax+B)$ with $A,B\in\Q$ and $B(A^2-4B)\neq0$, and let $T=(0,0)$. Then $T\in2E(\Q)$ if and only if $B=\beta^2$ for some $\beta\in\Q^\times$ and at least one of $A+2\beta$, $A-2\beta$ is a square in $\Q$.
\end{lemma}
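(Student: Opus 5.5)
The plan is to prove the halving criterion directly from the explicit doubling formula for \(E: y^2=x(x^2+Ax+B)\). The point \(T=(0,0)\) is halved exactly when there is a rational point \(Q=(x_0,y_0)\) with \([2]Q=T\). Since \(T\ne\mathcal O\), such a \(Q\) is finite, and \(y_0\neq0\): otherwise \(Q\) is \(2\)-torsion and \([2]Q=\mathcal O\). The tangent line at \(Q\) must therefore meet \(E\) again at \(-T=T\). I will use this tangent-line description rather than the full doubling formula.

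\emph{Step 1 (the line through \(T\)).} The tangent line at \(Q\) passes through \((0,0)\), so it has the form \(y=mx\). The intersection equation \(m^2x^2=x(x^2+Ax+B)\) reduces, after removing the root \(x=0\), to
\[
x^2+(A-m^2)x+B=0 .
\]
Tangency at \(Q\) means this quadratic has the double root \(x_0\). This gives \(x_0^2=B\) and \(m^2=A+2x_0\), that is, \((A-m^2)^2=4B\).

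\emph{Step 2 (forward direction).} From \(x_0^2=B\) with \(x_0\in\Q\), set \(\beta=x_0\). Then \(\beta\neq0\) because \(B\ne0\), so \(B=\beta^2\). The slope \(m=y_0/x_0\) is rational and satisfies \(m^2=A+2\beta\). Hence \(A+2\beta\) is a square. Replacing \(\beta\) by \(-\beta\) shows that the condition is symmetric: one of \(A\pm2\beta\) is a square.

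\emph{Step 3 (converse).} Suppose \(B=\beta^2\) and \(A+2\beta=m^2\); if instead \(A-2\beta\) is the square, replace \(\beta\) by \(-\beta\). Put \(x_0=\beta\) and \(y_0=m\beta\). We must check that \(Q=(x_0,y_0)\) lies on \(E\):
\[
x_0(x_0^2+Ax_0+B)=\beta(2\beta^2+A\beta)=\beta^2(A+2\beta)=m^2\beta^2 .
\]
Next we need \(y_0\neq0\), i.e. \(m\neq0\). If \(m=0\), then \(A=-2\beta\) and \(A^2-4B=0\), which the hypothesis excludes.

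Reversing Step 1, the line \(y=mx\) meets \(E\) at \(x=0\) and doubly at \(x=\beta\). It is therefore the tangent line at \(Q\), with third intersection point \(T\). It follows that \([2]Q=-T=T\).

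The only point needing care, and the one I expect to be the main obstacle, is the multiplicity bookkeeping in the converse. We must ensure that the double root at \(x=\beta\) genuinely corresponds to tangency at the single point \(Q\), and not to the two points \((\beta,\pm m\beta)\). This holds because both intersection points lie on the line \(y=mx\), so \(y\) is determined by \(x\) along the line and the double root is a single point counted twice. We must also confirm \(\beta\neq0\), which follows from \(B\neq0\).
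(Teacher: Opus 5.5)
Your proof is correct and is essentially the paper's argument. The paper quotes the duplication formula $x([2]P)=(x^2-B)^2/(4y^2)$ to get $x_0^2=B$, and then reads off $y_0^2=\beta^2(A+2\beta)$ from the curve equation. You instead extract both conditions at once from the tangent line $y=mx$ through $T$, which is just the geometric content of that formula and lets you avoid quoting it.
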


\begin{proof}
On this model the duplication formula reads
\[
x([2]P)=\frac{(x^2-B)^2}{4y^2},\qquad P=(x,y),\ y\neq0.
\]
Since $T$ is the unique point of $E$ with $x$-coordinate $0$, a point $P=(x,y)$ with $y\neq0$ satisfies $[2]P=T$ if and only if $x^2=B$.

Suppose such a $P$ exists with $x,y\in\Q$, and put $\beta=x$, so $B=\beta^2$ with $\beta\in\Q^\times$. Then
\[
y^2=\beta\,(\beta^2+A\beta+B)=\beta^2(A+2\beta),
\]
so $A+2\beta$ is a square. Conversely, suppose $B=\beta^2$ and, after replacing $\beta$ by $-\beta$ if necessary (which interchanges $A+2\beta$ and $A-2\beta$), that $A+2\beta=\gamma^2$ with $\gamma\in\Q$. Then $\gamma\neq0$, since $A=-2\beta$ would give $A^2=4B$. The point $Q=(\beta,\beta\gamma)$ lies on $E$, has $y(Q)\neq0$, and satisfies $x(Q)^2=B$, so $[2]Q=T$ and $Q$ has order~$4$.\hspace*{1em}
\end{proof}

\begin{proposition}\label{prop:4torsion-reduction}
Let $t\in\Q^\times$ and suppose that $\mathcalE_t(\Q)$ contains a point of order $4$. Then $t=u(u^2+4)/4$ for some $u\in\Q^\times$, and
\begin{equation}\label{eq:4torsion-B-square}
(u^2+4)(3u^2+4)\in\Q^{\times2}.
\end{equation}
\end{proposition}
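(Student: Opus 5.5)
A point of order $4$ has a double of order $2$. By Proposition~\ref{prop:Et-2torsion}, that forces $t=u(u^2+4)/4$ with $u\in\Q^\times$. It also identifies the double: it must be the unique rational $2$-torsion point $T_u=(-(u^2+4),0)$. So the plan is to translate $T_u$ to the origin, put $\mathcalE_t$ in the form $y^2=x(x^2+Ax+B)$, and read off the condition $B\in\Q^{\times2}$ from Lemma~\ref{lem:halving}.

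First I will compute the translated model. Put $r=-(u^2+4)$ and $X=x+r$. Since $r$ is a root of $F_t(X)=X^3+4X^2+16t^2$, we get
\[
F_t(x+r)=x^3+(3r+4)x^2+(3r^2+8r)x .
\]
Hence the model is $y^2=x(x^2+Ax+B)$ with
\[
A=3r+4=-(3u^2+8),\qquad B=r(3r+8)=(u^2+4)(3u^2+4).
\]
This change of variables is a translation over $\Q$, so it is an isomorphism sending $T_u$ to $T=(0,0)$.

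Next I check the hypotheses of Lemma~\ref{lem:halving}. We have $B\neq0$ since both factors are positive. For the other factor,
\[
A^2-4B=(3u^2+8)^2-4(u^2+4)(3u^2+4)=-3u^4+16u^2=u^2(16-3u^2).
\]
This is nonzero because $u\neq0$ and $16/3$ is not a rational square. Alternatively, $B(A^2-4B)$ is a nonzero multiple of $\Delta(\mathcalE_t)\neq0$.

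Finally I apply the lemma. A rational point $Q$ of order $4$ satisfies $2Q=T_u$, so $T\in2E(\Q)$ on the new model. Lemma~\ref{lem:halving} then gives $B=\beta^2$ with $\beta\in\Q^\times$, which is exactly \eqref{eq:4torsion-B-square}. The lemma also gives the extra condition that one of $A\pm2\beta$ is a square, but the statement does not need it; it will presumably be used later in the descent of Appendix~\ref{app:C4-descent}. The only real step is the algebra for $A$ and $B$, which is routine.
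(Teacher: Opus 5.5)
Your proof follows the paper's route: you identify the double of the order-$4$ point with $T_u$ via Proposition~\ref{prop:Et-2torsion}, translate $T_u$ to the origin to get $y^2=x\bigl(x^2-(3u^2+8)x+(u^2+4)(3u^2+4)\bigr)$, and read off $B\in\Q^{\times2}$ from Lemma~\ref{lem:halving}. Your values of $A$ and $B$ are correct.

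One computation is wrong, however:
\[
A^2-4B=(9u^4+48u^2+64)-(12u^4+64u^2+64)=-u^2(3u^2+16),
\]
not $u^2(16-3u^2)$. For example, at $u=1$ one has $A=-11$, $B=35$ and $A^2-4B=-19$. Your remark that $16/3$ is not a rational square is therefore about the wrong expression. The correct expression is nonzero simply because $u\neq0$ and $3u^2+16>0$.

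Your fallback argument is valid as it stands. Translation preserves the discriminant of the cubic, so $B^2(A^2-4B)=\operatorname{disc}(F_t)=-256t^2(16+27t^2)\neq0$. The proof is complete once the faulty line is corrected.

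Your aside that the condition on $A\pm2\beta$ will be used later is not borne out. The paper never needs it, because Theorem~\ref{thm:C4-points} already rules out $B$ being a square for $u\neq0$.
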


\begin{proof}
The double of a point of order $4$ is a rational point of order $2$, so Proposition~\ref{prop:Et-2torsion} gives $t=u(u^2+4)/4$ with $u\in\Q^\times$, and the unique rational point of order $2$ is $T_u=(-(u^2+4),0)$. The translation $X=W-(u^2+4)$ takes \eqref{eq:Et-family} to
\begin{equation}\label{eq:4torsion-shifted}
y^2=W\bigl(W^2-(3u^2+8)W+(u^2+4)(3u^2+4)\bigr),
\end{equation}
with $T_u$ at $(0,0)$: indeed $X^3+4X^2+16t^2=(X+u^2+4)\bigl(X^2-u^2X+u^2(u^2+4)\bigr)$ for this value of $t$, and shifting the first factor to $W$ produces \eqref{eq:4torsion-shifted}. Here
\[
B=(u^2+4)(3u^2+4)\neq0,
\qquad A^2-4B=-u^2(3u^2+16)\neq0,
\]
so Lemma~\ref{lem:halving} applies. A rational point of order $4$ forces
$T_u\in2\mathcalE_t(\Q)$, and the lemma requires $B$ to be a rational square.
\end{proof}

\begin{remark}\label{rem:4torsion-second-condition}
Lemma~\ref{lem:halving} imposes a second condition, that $A+2\beta$ or $A-2\beta$ be a square, where here $A=-(3u^2+8)$. If $B$ is a square, choose $\beta>0$ with $\beta^2=B$. Since $4B-(3u^2+8)^2=3u^4+16u^2>0$, one has $A+2\beta>0$ and $A-2\beta<0$, so only $A+2\beta$ can be a square. We will not need this: the necessary condition \eqref{eq:4torsion-B-square} already fails for every $u\in\Q^\times$.
\end{remark}

\begin{theorem}\label{thm:C4-points}
The smooth projective curve over $\Q$ with affine model
\[
C_4:\quad v^2=3u^4+16u^2+16=(u^2+4)(3u^2+4)
\]
has exactly two rational points, namely $(u,v)=(0,\pm4)$.
\end{theorem}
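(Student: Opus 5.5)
Since $C_4$ has the rational point $(0,4)$, it is an elliptic curve over $\Q$, and I would determine $E(\Q)$ for a Weierstrass model $E$. The most convenient route uses the factored form: for the quartic $v^2=(u^2+4)(3u^2+4)$ I put $v=(u^2+4)w$, or equivalently use the even-quartic substitution $x=$ const$\cdot v/u^2+\dots$. Because the quartic is even in $u$, the curve maps onto $y^2=x(x^2+16x+48)$. This comes from $V^2=3U^2+16U+16$ with $U=u^2$, so that $V^2U$ is the same as $(uv)^2$. Concretely, I would set $x=\dots$ and arrive at a model $E:\ y^2=x(x^2+Ax+B)$ carrying the rational $2$-torsion point $(0,0)$. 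For the quartic $au^4+bu^2+c$ with a rational point, a standard model is $y^2=x(x^2-2bx+b^2-4ac)$, which here gives $y^2=x(x^2-32x+64)$, i.e.\ $A=-32$, $B=64$. I would then verify directly the explicit birational map between $C_4$ and $E$, and identify which points of $E$ correspond to the points at infinity of $C_4$ and to $(0,\pm4)$.

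Next I would compute $E(\Q)$ by the classical descent via $2$-isogeny, $\phi:E\to E'$ with $E':y^2=x(x^2+64x+768)$ (using $A'=-2A$, $B'=A^2-4B$). I would determine the images of the connecting maps $\alpha:E(\Q)\to\Q^\times/\Q^{\times2}$ and $\alpha':E'(\Q)\to\Q^\times/\Q^{\times2}$ by the usual recipe. The candidate classes are the divisors $d$ of $B=64$ (squarefree parts $\pm1,\pm2$) for $E$, and of $B'=768=2^8\cdot3$ (classes $\pm1,\pm2,\pm3,\pm6$) for $E'$. For each $d$ I would test solvability of $N^2=dM^4+AM^2e^2+(B/d)e^4$, exhibiting points for the classes that come from torsion (images of $(0,0)$ and of the other $2$-torsion on $E$, since $x^2-32x+64$ has discriminant $768$, which is not a square). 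Every remaining class I would kill with a local obstruction, using sign (the real place) for negative $d$ where possible and a $3$-adic or $2$-adic argument otherwise. The statement in the section introduction suggests that exactly one coset survives all obvious tests except at $3$, and I expect that $3$-adic obstruction (for something like $d=3$ or $d=-3$ on $E'$) to be the crux.

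Once $\#\operatorname{im}\alpha\cdot\#\operatorname{im}\alpha'=4$, the formula $2^{r+2}=\#\operatorname{im}\alpha\cdot\#\operatorname{im}\alpha'$ gives rank $0$. I would then compute $E(\Q)_{\rm tors}$ by reduction modulo small good primes (say $5$ and $7$) together with Nagell--Lutz, expecting a small group containing $(0,0)$ and possibly a point of order $4$. Finally I would pull back each torsion point through the birational map and check that the only affine points obtained are $(0,\pm4)$; the remaining torsion points should correspond to the two points at infinity or to non-rational points. This step is routine, though I would take care with the points where the birational map is undefined. The main obstacle is the $2$-isogeny descent bookkeeping, and in particular finding the clean $3$-adic obstruction for the last surviving square class.
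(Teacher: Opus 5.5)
Your plan works. Its outer structure is the paper's: your $E\colon y^2=x(x^2-32x+64)$ is $J_4$ after $x=W-16$. Explicitly, $W=8(v+4+2u^2)/u^2$ and $y=8W/u$, with $(0,4)\mapsto\mathcal O$ and $(0,-4)\mapsto(0,0)$. Your $E'$ is $E''\colon z^2=x(x+1)(x+3)$ after $x\mapsto16x$. The torsion step is identical: $\#\widetilde E(\mathbb F_5)=4$, $\#\widetilde E(\mathbb F_7)=12$, and $(0,0)\notin 2E(\Q)$ since $-32\pm16<0$.

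The real difference is the descent. You compute both images $\operatorname{im}\alpha$ and $\operatorname{im}\alpha'$ and use $2^{r+2}=\#\operatorname{im}\alpha\cdot\#\operatorname{im}\alpha'$. The paper uses the isogeny only to transfer the rank, then runs one full $2$-descent on $E''$; its full rational $2$-torsion reduces the local work to a single conic over $\Q_3$. Your version works with quartics and would not need full $2$-torsion on either curve. The paper's is slightly shorter here because $E''$ happens to have it.

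Your forecast of the crux is off, however. The classes $3,-1,-3$ for $\alpha'$ are the images of the $2$-torsion points $(0,0),(-16,0),(-48,0)$ of $E'$, so they cannot be killed. In fact the $d=3$ quartic $N^2=3M^4+64M^2e^2+256e^4$ of $E'$ becomes $C_4$ itself after $e\mapsto e/2$.

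Relatedly, the curve $y^2=x(x^2+16x+48)$ you reach via $U=u^2$ is a rescaling of $E'$ and receives a degree-$2$ map from $C_4$. It is not a model of $C_4$, and the extra rational $2$-torsion lives on $E'$, not on $E$.

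The candidates $\pm2,\pm6$ for $\alpha'$ are artifacts of the non-minimal model. After rescaling to $x(x+1)(x+3)$ (the substitution $x\mapsto16x$ preserves square classes), only $\pm1,\pm3$ remain, so $\#\operatorname{im}\alpha'=4$ at once.

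On $E$, $\alpha(0,0)=64$ is trivial and $d=-1,-2$ fail at the real place. So the only class needing a finite-prime argument is $d=2$. The quartic $N^2=2M^4-32M^2e^2+32e^4$ satisfies $N^2\equiv2(M^2+e^2)^2\pmod 3$, which forces $3\mid M$ and $3\mid e$, since neither $2$ nor $-1$ is a square mod $3$.

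This is the paper's excluded class $(1,2)$ seen through $\phi$. For $P=(x_P,y_P)\in E(\Q)$, the $E''$-coordinate of $\phi(P)$ is the square $y_P^2/(16x_P^2)$. Adding $1$ gives $(x_P-8)^2/(16x_P)$, which has the square class of $x_P$. Hence $\#\operatorname{im}\alpha\cdot\#\operatorname{im}\alpha'=1\cdot4$, and the rank is $0$.

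Finally, the points at infinity of $C_4$ are defined only over $\Q(\sqrt3)$. Once $E(\Q)=\{\mathcal O,(0,0)\}$, the points $(0,\pm4)$ exhaust $C_4(\Q)$, and there are no ``remaining'' torsion points to match.
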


\begin{proof}
The maps
\begin{equation}\label{eq:C4-to-J4}
x=\frac{8(v+4)}{u^2},\qquad y=\frac{8(x+16)}{u},
\end{equation}
with inverse $u=8(x+16)/y$, $v=xu^2/8-4$, identify $C_4$ with
\[
J_4:\quad y^2=(x+16)(x^2-192).
\]
Proposition~\ref{prop:J4-arithmetic} in Appendix~\ref{app:C4-descent}
verifies this isomorphism, including the exceptional points, and proves
\[
J_4(\Q)=\{\mathcal O,(-16,0)\}.
\]
The arithmetic proof uses a $2$-isogeny to $z^2=x(x+1)(x+3)$ and a
complete $2$-descent, followed by a good-reduction torsion bound.
The two points displayed above correspond to $(0,4)$ and $(0,-4)$ on
$C_4$, respectively, proving the assertion.
\end{proof}

\begin{theorem}\label{thm:no-4-torsion}
For every $t\in\Q^\times$, the elliptic curve $\mathcalE_t$ has no rational point of order $4$, and hence none of order $8$.
\end{theorem}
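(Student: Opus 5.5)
The argument is a direct combination of Proposition~\ref{prop:4torsion-reduction} and Theorem~\ref{thm:C4-points}. Suppose, for contradiction, that $\mathcalE_t(\Q)$ contains a point of order $4$ for some $t\in\Q^\times$.

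First, Proposition~\ref{prop:4torsion-reduction} yields $u\in\Q^\times$ with $t=u(u^2+4)/4$ and $(u^2+4)(3u^2+4)=v^2$ for some $v\in\Q^\times$. This means $(u,v)$ is an affine rational point of $C_4$ with $u\neq0$.

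Second, Theorem~\ref{thm:C4-points} says the only rational points of $C_4$ are $(0,\pm4)$; the points at infinity are not rational, since the leading coefficient $3$ is not a square. Hence $u=0$, contradicting $u\in\Q^\times$ (equivalently, $t=0$, contradicting $t\neq0$). So there is no point of order $4$.

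Third, a point $Q$ of order $8$ would give the point $[2]Q$ of order $4$, which has just been excluded. Hence there is no point of order $8$ either.

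All the arithmetic difficulty sits in Theorem~\ref{thm:C4-points}, namely in the $2$-descent on $J_4$ given in Appendix~\ref{app:C4-descent}, and that result may be assumed here. The only points needing care are the nonvanishing of $u$ and the fact that the order-$4$ hypothesis really places $T_u$ in $2\mathcalE_t(\Q)$. The latter holds because $T_u$ is the unique rational $2$-torsion point (Proposition~\ref{prop:Et-2torsion}), so it must equal $[2]P$ for any rational point $P$ of order $4$.

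Combined with Corollary~\ref{cor:remaining-torsion}, this will leave only the trivial group or $\Z/2\Z$, giving part (ii) of Theorem~\ref{thm:main-family}. The uniqueness of $u$ stated there follows from the uniqueness of $T_u$: the relation $r=-(u^2+4)$ together with the sign choice in $t=u(u^2+4)/4$ determines $u$.
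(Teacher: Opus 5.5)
Your proposal is correct and follows essentially the same route as the paper: Proposition~\ref{prop:4torsion-reduction} turns a point of order $4$ into a rational point of $C_4$ with $u\neq0$, Theorem~\ref{thm:C4-points} rules that out, and order $8$ is handled by passing to a multiple of order $4$. Your extra remarks, that the uniqueness of $T_u$ forces $T_u\in2\mathcalE_t(\Q)$ and that $u$ is determined by $r$ together with the sign of $t$, are also valid. For the latter, the paper argues via monotonicity of $u\mapsto u(u^2+4)/4$ instead.
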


\begin{proof}
A rational point of order $4$ would produce, by Proposition~\ref{prop:4torsion-reduction}, an element $u\in\Q^\times$ with $(u^2+4)(3u^2+4)=v^2$ for some $v\in\Q$, that is, a rational point of $C_4$ with $u\neq0$. By Theorem~\ref{thm:C4-points} no such point exists. A rational point of order $8$ would have a multiple of order $4$.
\end{proof}

\begin{theorem}[Torsion classification]\label{thm:torsion-classification}
Let $t\in\Q^\times$. Then
\[
\mathcalE_t(\Q)_{\mathrm{tors}}\cong
\begin{cases}
\Z/2\Z, & \text{if } t=\dfrac{u(u^2+4)}{4} \text{ for some } u\in\Q^\times,\\[1.5ex]
0, & \text{otherwise,}
\end{cases}
\]
and in the first case $u$ is unique and the nontrivial torsion point is $T_u=\bigl(-(u^2+4),0\bigr)$.
\end{theorem}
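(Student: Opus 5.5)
The classification is an assembly of results already proved, so I will argue by combining them. First, Corollary~\ref{cor:remaining-torsion} uses Theorems~\ref{thm:no-3-torsion}, \ref{thm:no-5-torsion}, \ref{thm:no-7-torsion}, Proposition~\ref{prop:Et-2torsion} and Mazur's theorem to show that \(\mathcalE_t(\Q)_{\mathrm{tors}}\) is trivial or cyclic of order \(2\), \(4\) or \(8\). It also shows that the torsion is nontrivial only when \(t\) lies on the locus \eqref{eq:Et-2torsion-param}. Theorem~\ref{thm:no-4-torsion} then removes orders \(4\) and \(8\). So the torsion subgroup is either trivial or \(\Z/2\Z\), and it is \(\Z/2\Z\) exactly when \(\mathcalE_t(\Q)[2]\neq0\).

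Next I match the two cases to the parametrization. If \(t=u(u^2+4)/4\) with \(u\in\Q^\times\), then Proposition~\ref{prop:Et-2torsion} supplies the rational point \(T_u=(-(u^2+4),0)\) of order \(2\), so the torsion is \(\Z/2\Z\) and is generated by \(T_u\). If no such \(u\) exists, the same proposition shows \(\mathcalE_t(\Q)[2]=0\). Since the only surviving groups are \(0\) and \(\Z/2\Z\), the torsion is then trivial.

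It remains to prove that \(u\) is unique, which the proposition does not state explicitly; this is the only genuinely new step. Suppose \(u(u^2+4)=u'(u'^2+4)\) with \(u,u'\in\Q^\times\). The map \(u\mapsto u^3+4u\) has derivative \(3u^2+4>0\), so it is strictly increasing on \(\mathbb R\) and hence injective, giving \(u=u'\). Alternatively, \(F_t\) has a unique real root \(-(u^2+4)\), which determines \(u^2\); the sign of \(u\) is then fixed by the sign of \(t\), since \(u^2+4>0\). Either argument also confirms that the torsion point \(T_u\) is well defined as a function of \(t\).

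There is no real obstacle here. The substantive work is in the earlier exclusion theorems, and the only care needed is to cite Corollary~\ref{cor:remaining-torsion} and Theorem~\ref{thm:no-4-torsion} correctly and to include the monotonicity argument for uniqueness.
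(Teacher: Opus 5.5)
Your proposal is correct and matches the paper's proof: you combine Corollary~\ref{cor:remaining-torsion} with Theorem~\ref{thm:no-4-torsion} to reduce to $0$ or $\Z/2\Z$, read off the case split and $T_u$ from Proposition~\ref{prop:Et-2torsion}, and obtain uniqueness of $u$ from the strict monotonicity of $u\mapsto u(u^2+4)/4$. Your alternative uniqueness argument, via the unique real root of $F_t$ together with the sign of $t$, is also valid.
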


\begin{proof}
By Corollary~\ref{cor:remaining-torsion}, $\mathcalE_t(\Q)_{\mathrm{tors}}$ is trivial or cyclic of order $2$, $4$, or $8$; it is nontrivial only for $t$ in the displayed locus, and its element of order $2$ is then $T_u$. Theorem~\ref{thm:no-4-torsion} excludes the orders $4$ and $8$. The uniqueness of $u$ follows since $u\mapsto u(u^2+4)/4$ has derivative $(3u^2+4)/4>0$ and is therefore injective on $\mathbb{R}$.
\end{proof}

\begin{example}\label{ex:torsion-classification}
For $u=1$ one has $t=5/4$, and the fiber $\mathcalE_{5/4}:Y^2=X^3+4X^2+25$ satisfies $\mathcalE_{5/4}(\Q)_{\mathrm{tors}}=\{\mathcal O,\,(-5,0)\}$ by Theorem~\ref{thm:torsion-classification}, while $P_{5/4}=(0,-5)$ has infinite order by Theorem~\ref{thm:Pt-infinite}. Hence $\mathcalE_{5/4}(\Q)\cong\Z^r\times\Z/2\Z$ with $r\geq1$.
\end{example}

\section*{Data Availability}
All scripts, raw outputs, software-version information, manifests, checksums,
and licensed third-party files needed to reproduce the computer-assisted
arguments are available in the public GitHub repository at
\url{https://github.com/chatchawanpan-dev/rational-2-cycles-cubic-computations}.
The snapshot corresponding to this manuscript is the directory
\path{computational_files/} at commit
\href{https://github.com/chatchawanpan-dev/rational-2-cycles-cubic-computations/tree/a264cde0ba01013c3d5d0a7af3252b3294e2b37b/computational_files}{\texttt{a264cde0ba01}}.
No external datasets were used.

\section*{Competing Interests}
The authors declare no competing interests.

\section*{Funding}
This work was supported by the Fundamental Fund of Khon Kaen University,
which has received funding support from the National Science, Research and
Innovation Fund (NSRF) of Thailand.

\section*{Acknowledgments}
The authors thank Teerapol Sukhonwimolmal for helpful discussions during the
development of this project.

\appendix

\section{Computational Appendix}\label{app:computational}

\subsection{Symbolic identities}\label{app:divpoly}
The SageMath identity script verifies the tripling formula and the direct
expression for $x([6]\widetilde P)$ used in Section~\ref{sec:rank}, together
with the numerator congruence behind the coprimality argument. It also checks
the doubling and tangent-slope formulas. These exact identities corroborate
the self-contained Nagell--Lutz proof.

For the equation $y^2=x^3+a_2x^2+a_4x+a_6$, the
$3$-division polynomial is
\[
\psi_3=3x^4+4a_2x^3+6a_4x^2+12a_6x+4a_2a_6-a_4^2;
\]
see \cite[Chapter~III, Exercise~3.7]{silverman2009arithmetic} and
\cite[Section~3.2]{washington2008elliptic}. Substitution of
$(a_2,a_4,a_6)=(4,0,16t^2)$ gives the polynomial used in
Section~\ref{sec:torsion-3}. The online computational supplement also retains seven exact
division-polynomial evaluations at $P_t$, $[2]P_t$, and $[4]P_t$
on the short Weierstrass model obtained by $X=\xi-4/3$;
these additional checks are not inputs to the rank proof.

\subsection{Proof certificates and reproducibility}\label{app:verification}

The directory \path{computational_files/} in the GitHub snapshot identified
in Data Availability contains exact
scripts and raw outputs for SageMath~10.9 \cite{sagemath} and Magma~V2.29-6
\cite{magma}. The runner rejects error and interruption diagnostics and requires
successful termination and a final completion marker from every job. Instructions, versions, fixed seeds, inventory, and hashes are in
\path{README.md}, \path{SOFTWARE_VERSIONS.txt}, \path{MANIFEST.txt}, and
\path{SHA256SUMS.txt}. In Table~\ref{tab:proof-audit}, S denotes the SageMath identity and
auxiliary elliptic-rank script, P the Prym rank, torsion and fixed-locus
script, C the explicit Cassels--Tate pairing script, and E the six
elliptic-Chabauty jobs. Their full paths and commands are recorded in
\path{PROOF_AUDIT.md}.

The normalization certificate exports the full canonical-map tuple and
all linear model changes to \path{c5_canonical_map_checkpoint.txt} in
\path{magma_5torsion_verification/results/}. The six files
\path{c7_cover_1.txt} through \path{c7_cover_6.txt} in
\path{magma_7torsion_verification/elliptic_chabauty_route/checkpoints/}
record the exact fields, elliptic models, generators, and maps passed to
Chabauty. Their accompanying guide specifies the coordinate conventions.
The certificates generate these static records from the same objects
used in the verified calculations.

\begin{table}[htbp]
\caption{Proof-critical rank and completeness certificates.}
\label{tab:proof-audit}
\centering
\footnotesize
\setlength{\tabcolsep}{3pt}
\renewcommand{\arraystretch}{1.16}
\begin{tabular}{@{}>{\raggedright\arraybackslash}p{0.16\textwidth}>{\raggedright\arraybackslash}p{0.29\textwidth}>{\raggedright\arraybackslash}p{0.47\textwidth}@{}}
\hline
Result / script & Command and exact output & Hypotheses and proof interpretation \\
\hline
Proposition~\ref{prop:3torsion-auxiliary}; S
& \texttt{rank}: \(0\); torsion invariants \((2,2)\)
& PARI, proof mode enabled, database disabled. The rank call proves its
answer or raises an exception \cite{sageRankDocumentation}; hence the four
torsion points exhaust \(E'(\Q)\). \\[3pt]
Proposition~\ref{prop:C5-rational-points}; P
& \texttt{RankBounds}: \((0,0)\); unique deficient place \(2\)
& Selmer and rational \(2\)-torsion dimensions are \(2,1\). The
deficient-place obstruction lowers the rank bound without assuming finite
Sha \cite[Theorem~5 and Corollary~12]{poonenStollCasselsTate};
\cite{magmaHandbookJacobian}. \\[3pt]
Proposition~\ref{prop:C5-rational-points}; C
& Cassels--Tate matrix rank \(1\); radical dimension \(1\)
& Selmer dimension \(2\), torsion dimension \(1\); the rational Kummer
image lies in the radical, giving rank \(0\)
\cite[Section~3]{fisherYanCasselsTate}. \\[3pt]
Proposition~\ref{prop:C7-rational-points}; E
& \texttt{TwoCoverDescent}: six classes
& The displayed representatives equal the complete fake Selmer set
\cite{magmaHandbookTwoSelmer}. Lemma~\ref{lem:C7-cover-bridge} proves
that every affine rational point yields a point on one of the six
quartics through a norm map. \\[3pt]
Proposition~\ref{prop:C7-rational-points}; E
& Mordell--Weil rank \(2\); \texttt{Chabauty}: \(N=\#V=6\), \(R=1\)
& Finite index, rank \(2<4\), and curve/map good reduction above \(5\)
are checked. Since \(R=1\), the bound applies to the full group
\cite{magmaHandbookChabauty}. All six images and pullbacks are asserted. \\
\hline
\end{tabular}
\end{table}

The order-\(7\) jobs run in six fresh processes with seed \(1\).
Their local checks record two primes above \(5\), each of residue
degree \(2\) and ramification index \(1\). At both primes, the chosen
elliptic discriminant and the quartic leading coefficient and
discriminant are units; the actual reduced map and its infinity fiber
have degree \(2\). The note \path{LOCAL_REDUCTION.md} in the cover
directory explains the smooth models and transport of the map.

The two order-\(5\) rank certificates use the deficient-place
criterion and an explicit computation of the associated Cassels--Tate
pairing. They provide corroborating calculations and share the underlying
\(2\)-descent infrastructure; they are not independent software
implementations. For the explicit calculation, the Fisher--Yan pairing
routine, with Fisher--Liu minimization
\cite{fisherYanCasselsTate,fisherLiuMinimisation}, returns
\[
\begin{pmatrix}1&1\\1&1\end{pmatrix}
\quad\text{over }\mathbf F_2.
\]
This matrix has rank \(1\) and radical dimension \(1\); the pairing need
not be alternating. The rational Kummer image lies in this radical,
whereas its dimension is \(\rank J(P_5)(\Q)+1\). Thus the explicit
pairing also gives rank zero, without assuming finiteness of Sha
\cite[Section~3]{fisherYanCasselsTate}. The pinned sources, license, and
checksums are included. No script enables GRH bounds.

The remaining scripts check exact group-law and division-polynomial
identities, the modular \(j\)-maps, the order-\(5\) normalization and Prym
splitting, finite-field orders, and the fixed locus. They also verify the
order-\(7\) Tate \(c_4\)-identity and final fourth-power obstruction. The
order-\(4\) calculations corroborate the self-contained descent in
Appendix~\ref{app:C4-descent}, which proves Theorem~\ref{thm:C4-points}; bounded point searches are never used for
completeness. No proof conclusion rests on an uncertified numerical
comparison.

\FloatBarrier
\section{Arithmetic of the Quartic in the 4-Torsion Argument}\label{app:C4-descent}

This appendix supplies the explicit model calculation and elementary
$2$-descent used in Theorem~\ref{thm:C4-points}. The argument is independent
of the computer-assisted rank calculations elsewhere in the paper.

\begin{proposition}\label{prop:J4-arithmetic}
The maps \eqref{eq:C4-to-J4} extend to a $\Q$-isomorphism from the smooth
projective curve $C_4$ to
\[
J_4:\quad y^2=(x+16)(x^2-192),
\]
sending $(0,4)$ to $\mathcal O$ and $(0,-4)$ to $(-16,0)$. Moreover,
\[
J_4(\Q)=\{\mathcal O,(-16,0)\}.
\]
\end{proposition}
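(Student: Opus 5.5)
The plan is to verify the isomorphism by direct substitution, then show $\rank J_4(\Q)=0$ via a $2$-isogeny and an elementary complete $2$-descent, and finally pin down the torsion by reduction. The substitution first. Put $x=8(v+4)/u^2$ and use $v^2=3u^4+16u^2+16$, that is, $(v-4)(v+4)=u^2(3u^2+16)$. This gives $v-4=u^2(3u^2+16)/(v+4)$, which expresses $x+16$ and $x^2-192$ as rational functions of $u,v$. We then check that $y=8(x+16)/u$ satisfies $y^2=(x+16)(x^2-192)$, and that the inverse formulas $u=8(x+16)/y$, $v=xu^2/8-4$ land back on $C_4$. Both curves are smooth of genus one, so this birational map extends to an isomorphism. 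The exceptional points are tracked by local expansions at $u=0$:
\begin{itemize}
\item on the branch $v\to4$, $x\sim 64/u^2$ has a pole, so $(0,4)\mapsto\mathcal O$;
\item on the branch $v\to-4$, $v+4=O(u^2)$, giving $x\to-16$ and $y\to0$, so $(0,-4)\mapsto(-16,0)$;
\item the two points at infinity of $C_4$ (leading coefficient $3$, not a square) are irrational, so they do not affect the rational count.
\end{itemize}

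Next, the rank. Translate $x=X-16$, so that
\[
J_4:\ y^2=X(X^2-32X+64),
\]
with $T=(0,0)$ the rational $2$-torsion point. The standard $2$-isogeny with kernel $\langle T\rangle$ maps to $y^2=X(X^2+64X+768)$. Rescaling $X=16x$ turns this into
\[
E':\ z^2=x(x+1)(x+3),
\]
which has full rational $2$-torsion. Both curves have bad reduction only at $2$ and $3$, since the discriminants are supported there. Rank is isogeny invariant, so it suffices to prove $\rank E'(\Q)=0$. For this we run the complete $2$-descent on $E'$. A point with $x\ne0,-1,-3$ gives $x=d_1a^2$, $x+1=d_2b^2$, $x+3=d_1d_2c^2$, with $d_1,d_2\in\{\pm1,\pm2,\pm3,\pm6\}$. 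We then discard pairs $(d_1,d_2)$ using:
\begin{itemize}
\item positivity at the real place;
\item $2$-adic solubility;
\item $3$-adic solubility.
\end{itemize}
We aim to show the surviving pairs are exactly the images of the $2$-torsion points $(0,0)$, $(-1,0)$, $(-3,0)$, together with any image of a $4$-torsion point. If $E'(\Q)$ has a point of order $4$, its image must be accounted for as a torsion class and not a free one.

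I expect the main obstacle to be the single remaining coset of square classes, as foreshadowed in Section~\ref{sec:torsion-2primary}: a pair that is everywhere locally plausible except at one prime. For this coset I would write out the quartic system modulo $9$ (or $3$-adically with valuations) and show it has no nontrivial $\Q_3$-solution. This yields $E'(\Q)/2E'(\Q)\cong E'(\Q)[2]$ up to torsion contributions, hence rank $0$, and so $\rank J_4(\Q)=0$.

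Finally, the torsion of $J_4$. We reduce at small good primes $p\ge5$, say $p=5,7,11$, and count points with $x^3-16x^2\ldots$ written out explicitly. Prime-to-$p$ torsion injects under good reduction, so we expect the gcd of these counts (handling each $p$-part using the other primes) to bound $\#J_4(\Q)_{\mathrm{tors}}$ by $2$. Since $(-16,0)$ is a rational point of order $2$, this gives $J_4(\Q)=\{\mathcal O,(-16,0)\}$. If the point counts leave a factor $4$, we would instead exclude $4$-torsion with Lemma~\ref{lem:halving} applied to $X(X^2-32X+64)$. There $B=64=8^2$, and $A\pm2\beta=-32\pm16$ are both negative, hence not squares, so $(-16,0)\notin2J_4(\Q)$.
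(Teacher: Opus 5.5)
Your proposal is correct and follows essentially the paper's route: the same substitution check for the isomorphism, the same $2$-isogeny to $z^2=x(x+1)(x+3)$ with a complete $2$-descent whose only surviving non-torsion class, $(d_1,d_2)=(1,2)$ with $a^2-2c^2=-3$, is killed $3$-adically, and a good-reduction torsion bound completed by Lemma~\ref{lem:halving}. The halving step is in fact mandatory rather than a fallback. $J_4$ is isogenous to a curve with full rational $2$-torsion, so every $\#\widetilde J_4(\mathbb F_p)$ is divisible by $4$ (the counts are $4,12,8$ at $p=5,7,11$). You must therefore also use that $(-16,0)$ is the unique rational $2$-torsion point, since $192$ is not a square, to rule out $\Z/2\Z\times\Z/2\Z$ before the halving lemma removes $\Z/4\Z$.
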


\begin{proof}
The quartic $3u^4+16u^2+16$ has discriminant $2^{20}\cdot3\neq0$, so $C_4$ is a curve of genus $1$; its two places at infinity are interchanged by $\operatorname{Gal}(\Q(\sqrt3)/\Q)$, because the leading coefficient $3$ is not a rational square, so they are not rational. Since $C_4$ contains the rational point $(0,4)$, it is an elliptic curve.

\emph{The Weierstrass model.} On $C_4$ one has the identity
\[
(v+4)^2-3u^4=v^2+8v+16-3u^4=8v+16u^2+32=8\,(v+4+2u^2),
\]
using $v^2=3u^4+16u^2+16$. Consequently, with $x$ as in \eqref{eq:C4-to-J4},
\begin{align*}
x+16&=\frac{8(v+4+2u^2)}{u^2},\\
x^2-192&=\frac{64\bigl[(v+4)^2-3u^4\bigr]}{u^4}\\
&=\frac{512\,(v+4+2u^2)}{u^4},
\end{align*}
and therefore
\begin{align*}
(x+16)(x^2-192)
&=\frac{4096\,(v+4+2u^2)^2}{u^6}\\
&=\Bigl(\frac{8(x+16)}{u}\Bigr)^{\!2}=y^2,
\end{align*}
as claimed; the displayed inverse is immediate from \eqref{eq:C4-to-J4}. A birational map between smooth projective curves extends to an isomorphism, so $C_4\cong J_4$ over $\Q$ and $\#C_4(\Q)=\#J_4(\Q)$. Under \eqref{eq:C4-to-J4} the point $(0,4)$ maps to the point at infinity, and $(0,-4)$ maps to $(-16,0)$: along the branch of $C_4$ through $(0,-4)$ one has $v+4=-2u^2+\tfrac18u^4+O(u^6)$, so $x+16=u^2+O(u^4)\to0$ and $y=8(x+16)/u\to0$.

\emph{The Mordell--Weil group of $J_4$.} The translation $x=W-16$ puts $J_4$ in the form
\[
J_4:\quad y^2=W\,(W^2-32W+64).
\]
The quadratic factor has discriminant $32^2-4\cdot64=768=2^8\cdot3\notin\Q^{\times2}$, so $T=(0,0)$ is the unique rational point of order $2$.

We first show that $J_4$ has rank $0$. By the standard $2$-isogeny construction \cite[Example~III.4.5]{silverman2009arithmetic}, the quotient of $J_4$ by $\langle T\rangle$ is
\[
y^2=W\bigl(W^2+64W+768\bigr)=W\,(W+16)(W+48),
\]
and the substitution $W=16x$, $y=64z$ identifies this curve with
\[
E'':\quad z^2=x\,(x+1)(x+3).
\]
Since Mordell--Weil rank is invariant under isogeny, it suffices to prove that $\operatorname{rank}E''(\Q)=0$, which we do by a complete $2$-descent \cite[Proposition~X.1.4]{silverman2009arithmetic}. Write $e_1=0$, $e_2=-1$, $e_3=-3$, and let
\[
\delta:E''(\Q)/2E''(\Q)\hookrightarrow \Q^\times/\Q^{\times2}\times\Q^\times/\Q^{\times2},\qquad
(x,z)\longmapsto (x-e_1,\;x-e_2),
\]
where \(\delta(\mathcal O)=(1,1)\). At \((e_i,0)\), a vanishing
coordinate \(x-e_i\) is replaced by
\(\prod_{k\neq i}(e_i-e_k)\); every other coordinate is evaluated
normally, and all entries are taken modulo squares.
The first coordinate of the image is supported on the primes dividing $(e_1-e_2)(e_1-e_3)=3$, and the second on those dividing $(e_2-e_1)(e_2-e_3)=-2$; thus the image lies in $\{\pm1,\pm3\}\times\{\pm1,\pm2\}$. Moreover $x(x+1)(x+3)\geq0$ forces $x\in[-3,-1]\cup[0,\infty)$ on $E''(\mathbb{R})$, so the two coordinates have equal signs. This leaves eight candidate classes. The images of the $2$-torsion points are
\[
\delta(0,0)=(3,1),\qquad \delta(-1,0)=(-1,-2),\qquad \delta(-3,0)=(-3,-2),
\]
which together with $(1,1)$ form a subgroup $H$ of order $4$; the remaining four candidates form the single coset $(1,2)H$. If the class $(b_1,b_2)=(1,2)$ lay in the $2$-Selmer group, the associated torsor equations
\[
z_1^2-2z_2^2=-1,\qquad z_1^2-2z_3^2=-3
\]
would be everywhere locally soluble. But the second has no solution in $\Q_3$: a primitive $\Z_3$-triple satisfying $p^2-2r^2+3q^2=0$ would give $p^2\equiv2r^2\pmod 3$, and since $2$ is not a square modulo $3$ this forces $3\mid p$ and $3\mid r$, hence $9\mid 3q^2$ and $3\mid q$, contradicting primitivity. As the Selmer group is a group containing $H$, the entire coset $(1,2)H$ is excluded. Hence the image of $\delta$ is exactly $H$, so $\#\,E''(\Q)/2E''(\Q)=4=\#\,E''(\Q)[2]$ and $\operatorname{rank}E''(\Q)=0$. Therefore $\operatorname{rank}J_4(\Q)=0$.

Next, the torsion. The discriminant of $J_4$ is supported on $\{2,3\}$, so $J_4$ has good reduction at $5$ and $7$, with
\[
\#\widetilde J_4(\mathbb{F}_5)=4,\qquad \#\widetilde J_4(\mathbb{F}_7)=12,
\]
by direct count. For every prime $\ell\neq5$ the $\ell$-primary part of $J_4(\Q)_{\mathrm{tors}}$ injects into $\widetilde J_4(\mathbb{F}_5)$, and the $5$-primary part injects into $\widetilde J_4(\mathbb{F}_7)$; hence $\#J_4(\Q)_{\mathrm{tors}}$ divides $4$. Finally, $T=(0,0)$ is not divisible by $2$ in $J_4(\Q)$: in Lemma~\ref{lem:halving} we have $B=64=8^2$, and $A+2\beta=-16$, $A-2\beta=-48$ are both negative, hence not squares. So there is no rational point of order $4$, and
\[
J_4(\Q)=J_4(\Q)_{\mathrm{tors}}=\{\mathcal O,\;T\}.
\]

In the original $x$-coordinate, the point $T$ is $(-16,0)$, as asserted.
\end{proof}

\FloatBarrier

\end{document}